\numberwithin{equation}{section}
\newtheorem{theorem}{Theorem}[section]
\newtheorem*{theorem*}{Theorem}
\newtheorem{lemma}[theorem]{Lemma}
\newtheorem{claim}[theorem]{Claim}
\newtheorem*{observation*}{Observation}
\theoremstyle{definition}{

\newtheorem*{remark*}{Remark}
}
\newcommand{\R}{\mathbb R}
\newcommand{\Q}{\mathbb Q}
\newcommand{\N}{\mathbb N}
\newcommand{\Z}{\mathbb Z}
\newcommand{\eps}{\varepsilon}
\newcommand{\ra}{\rightarrow}
\newcommand{\lam}{\lambda}
\newcommand{\E}{\mathbb{E}}
\renewcommand{\P}{\mathbb{P}}
\newcommand{\cF}{\mathcal{F}}
\renewcommand{\epsilon}{\varepsilon}
\newcommand{\M}{\mathbb M}
\DeclareMathOperator{\sgn}{sgn}
\newcommand{\LL}{\mathcal{L}}
\date{}
\begin{document}
\title{Poisson Thickening}

\author{Ori Gurel-Gurevich}
\address{Ori Gurel-Gurevich\hfill\break
University of British Columbia\\
1984 Mathematics Road\\
Vancouver, BC, V6T 1Z2, Canada.}
\email{origurel@math.ubc.ca}
\urladdr{http://www.math.ubc.ca/~origurel/}

\author{Ron Peled}
\thanks{Research of R.P. supported by NSF Grant OISE 0730136.}
\address{Ron Peled\hfill\break
Tel Aviv University\\
School of Mathematical Sciences\\
Ramat Aviv, Tel Aviv, 69978, Israel.}
\email{peledron@post.tau.ac.il}
\urladdr{http://www.math.tau.ac.il/~peledron}

\begin{abstract}
Let $X$ be a Poisson point process of intensity $\lambda$ on the real line. A \emph{thickening} of it is a (deterministic) measurable function $f$ such that $X \cup f(X)$ is a Poisson point process of intensity $\lambda'$ where $\lambda'>\lambda$. An \emph{equivariant} thickening is a thickening which commutes with all shifts of the line. We show that a thickening exists but an equivariant thickening does not. We prove similar results for thickenings which commute only with integer shifts and in the discrete and multi-dimensional settings. This answers 3 questions of Holroyd, Lyons and Soo.

We briefly consider also a much more general setup in which we ask for the existence of a deterministic coupling satisfying a relation between two probability measures. We present a conjectured sufficient condition for the existence of such couplings.
\end{abstract}

\maketitle

\section{Introduction and Results}

\subsection{Main Theorems}Let $\M$ be the space of locally finite sets in $\R$, endowed with its standard $\sigma$-algebra\footnote{This is the minimal $\sigma$-algebra under which all projection maps $\mu_B$ are measurable, for any borel $B\subset \R$, where $\mu_B(S)$ is the cardinality of $S\cap B$.}.
We view Poisson processes on $\R$ as random elements of $\M$. For $\lam'>\lam>0$, we call a measurable function $f:\M\ra\M$ a \emph{thickening} (from intensity $\lam$ to $\lam'$) if $X\cup f(X)$ is a Poisson process of intensity $\lam'$ when $X$ is a Poisson process of intensity $\lam$. Thus, $f$ adds points to $X$, at locations which are determined solely by $X$, and produces a Poisson process of higher intensity.
A thickening $f$ is \emph{equivariant} if $\sigma \circ f = f \circ \sigma$ for any shift operator $\sigma:\R\to\R$. The following theorems address the existence of thickenings and equivariant thickenings.

\begin{theorem} \label{no}
An equivariant thickening does not exist for any $\lambda'>\lambda>0$.
\end{theorem}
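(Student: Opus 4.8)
The plan is to argue by contradiction, exploiting the tension between the \emph{exact} distributional structure of a Poisson process and the fact that an equivariant thickening would produce it deterministically from a strictly sparser Poisson process. Suppose $f$ is an equivariant thickening from intensity $\lambda$ to intensity $\lambda'>\lambda$. Replacing $f(X)$ by $f(X)\setminus X$ (still equivariant, still a thickening), we may assume $f(X)\cap X=\emptyset$ a.s., so that $Z:=X\cup f(X)\sim\mathrm{Pois}(\lambda')$ is the disjoint union of $X\sim\mathrm{Pois}(\lambda)$ and the added set $W:=f(X)$, a deterministic translation-equivariant function of $X$ of intensity $\lambda'-\lambda>0$. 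Writing $g(\omega):=\omega\cup f(\omega)$, we have an equivariant map that only adds points and satisfies $g_*\mathrm{Pois}(\lambda)=\mathrm{Pois}(\lambda')$; in particular $X$, despite being strictly sparser, determines $Z$ — informally, it must carry all the information in $Z$, equivariantly. The contradiction will come from this being impossible, and the mechanism I would use is to condition on a positive-probability event that pins down part of the output.

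Fix a large $T$ and condition on $\{X\cap[-T,T]=\emptyset\}$, an event of probability $e^{-2\lambda T}$. On this event $f(X)\cap[-T,T]$ is a measurable function of $X$ restricted to $[-T,T]^c$, whose law is unaffected by the conditioning; hence for every configuration-event $A$ supported on $[-T,T]$,
$$\P\big(Z\cap[-T,T]\in A\big)\ \ge\ \P\big(Z\cap[-T,T]\in A,\ X\cap[-T,T]=\emptyset\big)\ =\ e^{-2\lambda T}\,\P\big(f(X)\cap[-T,T]\in A\ \big|\ X\cap[-T,T]=\emptyset\big),$$
so the conditional law of $Z$ on $[-T,T]$ given $\{X\cap[-T,T]=\emptyset\}$ is absolutely continuous with respect to $\mathrm{Pois}(\lambda')$ restricted to $[-T,T]$, with density at most $e^{2\lambda T}$. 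This already settles the case in which $f$ has a bounded range of dependence $R$ (whether $x\in f(\omega)$ depends on $\omega$ only through $\omega\cap[x-R,x+R]$): by equivariance $f(\emptyset)$ is a shift-invariant locally finite set, hence empty, so on $\{X\cap[-T,T]=\emptyset\}$ the set $f(X)\cap[-T+R,T-R]$ is deterministically empty, whence
$$e^{-2\lambda'(T-R)}=\P\big(Z\cap[-T+R,T-R]=\emptyset\big)\ \ge\ \P\big(X\cap[-T,T]=\emptyset\big)=e^{-2\lambda T},$$
i.e.\ $\lambda'(T-R)\le\lambda T$, which is false once $T$ is large. So no equivariant thickening with bounded range exists.

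The main obstacle — and the reason equivariant thickenings are delicate while non-equivariant ones exist — is that a general measurable equivariant thickening need \emph{not} have bounded range: it could "fill in" the long empty interval $[-T,T]$ using the configuration of $X$ arbitrarily far away, so that $f(X)\cap[-T,T]$ is genuinely random and even close to a $\mathrm{Pois}(\lambda'-\lambda)$ configuration conditional on $\{X\cap[-T,T]=\emptyset\}$; then the crude density bound $e^{2\lambda T}$ above is not, by itself, contradictory. Ruling this out is the heart of the proof and must be carried out globally, reading the impossibility off the exact identity $g_*\mathrm{Pois}(\lambda)=\mathrm{Pois}(\lambda')$ rather than any local feature of $f$. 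I expect the argument to combine the absolute-continuity estimate (the conditional law of $Z$ on a long interval, given a hole of $X$ there, is dominated by $e^{2\lambda T}$ times the Poisson law, yet on that event $Z$ on the interval is a deterministic image of external data carrying only "$\lambda$ worth of randomness per unit length") with the mutual independence of $Z$ across a partition into many such intervals, together forcing $\lambda'\le\lambda$; carrying out this counting precisely — and controlling how strongly the added points can depend on faraway configuration — is where the real work lies, and the same scheme should apply in the discrete setting (where single configurations have positive probability, so the atom at the empty configuration can be used directly) and in higher dimensions.
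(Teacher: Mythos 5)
Your setup is clean and your bounded-range observation is correct and nicely self-contained, but you have explicitly left the general case open, and that gap is the whole theorem. The crude absolute-continuity bound (density $\le e^{2\lambda T}$) is, as you note, ``not, by itself, contradictory,'' and the sketch you give for how to close it --- ``combine the absolute-continuity estimate \ldots\ with the mutual independence of $Z$ across a partition into many such intervals, together forcing $\lambda'\le\lambda$'' --- is not an argument yet. More worryingly, it is not clear where equivariance would enter that sketch. In the bounded-range case you use equivariance only via $f(\emptyset)=\emptyset$; in the general sketch you say the impossibility should be read off ``the exact identity $g_*\mathrm{Pois}(\lambda)=\mathrm{Pois}(\lambda')$ rather than any local feature of $f$'' --- but by Theorem~\ref{yes} a \emph{non-equivariant} $f$ with exactly that pushforward identity \emph{does} exist, so any argument that does not lean on a consequence of equivariance is doomed. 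The real work you defer is precisely the work of finding a usable, quantitative consequence of equivariance for a general measurable $f$.

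The paper's proof finds exactly such a consequence and goes by a quite different route. It splits $Y:=X\cup f(X)$ uniformly at random into $(Y_1,Y_2)$ and compares the joint law of $(Y_1,Y_2)$ with that of an honest independent pair of $\mathrm{Pois}(1)$ processes $(Z_1,Z_2)$ obtained by splitting a $\mathrm{Pois}(2)$ process $Z$. The event used to distinguish them is a counting event built from a finite-window approximation $B_0(X)\in\cF_{[-r,r]}$ of the event ``$f(X)$ has a point in $[0,\eps]$.'' Equivariance is used exactly once but crucially: it gives a single $r$ that works simultaneously for all shifted approximants $B_{i\eps}$, so that the ``correlation'' event between $Y_1$ and $Y_2$ (witnessing $Y_2=f(Y_1)$ locally) has probability at least exponentially small in the window length $L\eps$, of order $2^{-cL\eps}$; by contrast, for the truly independent pair $(Z_1,Z_2)$ the same event has probability at most $2^{5L\eps}\eps^{L\eps/8}$, which for small fixed $\eps$ is super-exponentially smaller. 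Comparing the two rates yields the contradiction. This quantitative gap between exponential (via equivariance plus a local approximation) and super-exponential (via independence) is the mechanism your approach is missing; the conditioning-on-a-hole idea does not by itself produce a comparable two-sided estimate, and you would need to invent something playing the role of the uniform $r$ to make it work. As it stands, the proposal proves only the bounded-range special case and correctly diagnoses, but does not resolve, the main difficulty.
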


\begin{theorem} \label{yes}
A (\emph{non}-equivariant) thickening exists for every $\lambda'>\lambda>0$.
\end{theorem}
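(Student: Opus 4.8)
The goal is to exhibit an explicit measurable map. Writing $\Psi(\mu):=\mu\cup f(\mu)$, the task amounts to producing a measurable $\Psi:\M\to\M$ with $\Psi(\mu)\supseteq\mu$ for every $\mu$ and with $\Psi$ pushing the law $\P_\lambda$ of the intensity-$\lambda$ process forward to the law $\P_{\lambda'}$ of the intensity-$\lambda'$ process; equivalently, one wants a coupling of $X\sim\P_\lambda$ with $Y\sim\P_{\lambda'}$ in which $X\subseteq Y$ and $Y$ is a \emph{deterministic} function of $X$. Given an auxiliary independent source of fair coins the problem would be immediate -- superpose on $X$ an independent intensity-$(\lambda'-\lambda)$ Poisson process built from the coins -- so the difficulty is that the randomness used to place the new points must be manufactured out of $X$ itself while every point of $X$ survives in $Y$. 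Since a deterministic function of $X$ is never independent of $X$, one cannot simply split off an independent Poisson process; moreover, by Theorem~\ref{no} no translation-equivariant rule can work, so in particular no shift-invariant local recipe -- inserting into each gap of $X$ a configuration determined by that gap, or by boundedly many neighbouring gaps -- can succeed. Any valid $\Psi$ must genuinely exploit the origin, and its output on a bounded window will have to depend on $X$ arbitrarily far away.

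Non-equivariance is precisely what supplies the needed room. The structural fact to exploit is that a Poisson process on $\R$ decomposes, through a measurable bijection, into mutually independent pieces: the length $G$ of the gap of $X$ straddling $0$ (so $G\sim\mathrm{Gamma}(2,\lambda)$), the position $U\in[0,1]$ of $0$ inside that gap (which is uniform and independent of everything else), and the two intensity-$\lambda$ processes obtained by translating the parts of $X$ lying beyond that gap to start at $0$. Iterating this decomposition -- now at the first two points of each half-line piece -- strips off from $X$ an i.i.d.\ sequence of uniform random variables, together with a residual part which, jointly with those uniforms, reconstitutes $X$. This is the ``reservoir'' of fresh randomness that $\Psi$ will draw on without disturbing the points of $X$.

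The plan for $\Psi$ is to retain the points of $X$ as a skeleton and to insert into its successive gaps intensity-$(\lambda'-\lambda)$ configurations, feeding the insertion in each gap with uniforms drawn from the reservoir attached to gaps situated ever farther out; the assignment of reservoir to gaps is arranged so that the ``entropy debt'' drifts off to infinity rather than ever coming due, and so that the inserted configurations end up jointly distributed with the skeleton exactly as the target law requires. Treating $[0,\infty)$ and $(-\infty,0)$ separately and processing gaps outward from the origin avoids the circular dependencies that sink the naive attempts. The origin gap itself receives special treatment, since its length must pass from a size-biased $\lambda$-gap to a size-biased $\lambda'$-gap; this is accomplished by inserting into it the appropriate intensity-$(\lambda'-\lambda)$ configuration, built from the free uniform $U$ and the residual randomness of $G$.

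Finally one verifies that the output has law exactly $\P_{\lambda'}$, for instance by computing its Laplace functional, or its void probabilities together with its Palm measure via Slivnyak's theorem, and matching these with the intensity-$\lambda'$ Poisson process. The substance of this step is that the correlations introduced by recycling $X$'s own randomness cancel completely at the level of finite-dimensional distributions. I expect the main obstacle to be exactly the design of this recycling scheme -- a concrete, consistent allocation of the reservoir to the gaps that yields the right joint law -- together with the distributional computation that certifies it.
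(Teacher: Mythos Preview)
Your high-level intuition is right --- break $X$ into pieces, use piece $i+1$ to manufacture the randomness that fills piece $i$, and let the ``entropy debt'' drift to infinity --- and this is exactly the architecture the paper uses. But the concrete mechanism you propose does not work, and the gap is precisely where the paper's real content lies.

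First, the reservoir you describe does not exist. The origin gap genuinely yields one free uniform $U$, but iterating the decomposition to subsequent gaps does not produce more: a Poisson process on a half-line is \emph{completely} encoded by its i.i.d.\ exponential gap lengths, and there is no further independent residual to strip off. Once past the origin, the ``skeleton'' you want to preserve \emph{is} all of $X$; there is nothing left over to serve as a reservoir independent of it.

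Second, and more fundamentally, even if you relabel the gap lengths themselves as your uniforms, the scheme ``fill gap $i$ using a function of gap $i+1$'' cannot give the right law by any direct allocation. The endpoints of gap $i+1$ survive in $Y$, so whatever you extracted from gap $i+1$ remains correlated with $Y$ in that region. The paper makes this obstruction precise: it proves (Lemma~\ref{no-extractor}) that there is \emph{no} measurable function of $X$ that is both $\P_{1/2}$-distributed and independent of $\max(X,Y)$ --- i.e., no exact ``extractor'' exists. So the cancellation you hope to certify at the end simply will not occur for any finite recycling rule.

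The paper's fix is the missing idea. One builds $\eps$-extractors (bits whose conditional law given the thickened block is within $\eps$ of fair), pairs each with a \emph{corrector} that uses a further independent block to repair the $\eps$-bias exactly, and chooses $\eps_i=2^{-i}$ along the chain so that by Borel--Cantelli only finitely many corrections are ever nonzero; this makes the infinite recursion converge almost surely to a well-defined $F(X)$ whose law is exactly the target. Your outline would need all three of these ingredients --- the impossibility of exact extraction, the $\eps$-extractor/corrector pair, and the summable-error convergence argument --- to become a proof.
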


We remark that in other works on equivariant extension of processes, it is also common to have the function $f$ depend on additional randomness and have the equality $\sigma \circ f = f \circ \sigma$ hold only in distribution. In our context, existence of these so called randomized equivariant thickenings is trivial.

We turn now to discrete analogues of the above theorems.
For $0<p<p'<1$, let $X= \{X_i\}_{i\in\Z}$ be a sequence of i.i.d. $\{0,1\}$-valued random variables with $\E(X_0)=p$. A measurable function $f:\{0,1\}^\Z \ra \{0,1\}^\Z$ is called a \emph{discrete thickening} (from density $p$ to density $p'$), if the sequence $\{f(X)_i\}_{i\in\Z}$ is i.i.d. with $\E(f(X)_i)=p'$ and $f(X)_i \ge X_i$ for all $i\in\Z$. $f$ is called \emph{equivariant} if $\sigma \circ f = f \circ \sigma$ where $\sigma:\{0,1\}^\Z \ra \{0,1\}^\Z$ is defined by $\sigma(x)_i=x_{i+1}$. An equivariant function cannot increase the entropy of a process (a consequence of Kolmogorov-Sinai theorem, see e.g. \cite{P}, Chapter 5); hence, there is no equivariant discrete thickening for $p < p'< 1-p$. In \cite{B2}, Ball showed that equivariant discrete thickenings \emph{do exist} when $1-p<p'$. The case $p'=1-p$ (for $p<\frac{1}{2}$) appears to have not been treated.
The next theorem shows that as far as \emph{non}-equivariant thickenings are concerned, the discrete and continuous cases are similar.

\begin{theorem} \label{discrete}
A (\emph{non}-equivariant) discrete thickening exists for every $0<p<p'<1$.
\end{theorem}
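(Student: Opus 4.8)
The plan is to construct $f$ explicitly, in parallel with our proof of Theorem~\ref{yes}. Begin with two reductions. By Ball's theorem~\cite{B2} an equivariant discrete thickening --- and hence a discrete thickening --- exists whenever $\max(p,1-p)<p'$, so we may assume $p'\le 1-p$; writing $h(x):=-x\log x-(1-x)\log(1-x)$, this is precisely the regime $h(p)\le h(p')$, and its subregime $p'<1-p$ is exactly where an equivariant thickening provably does not exist. Next, since the negative-index part and the nonnegative-index part of $X$ are independent, it suffices to solve the one-sided problem: given an i.i.d.\ $\operatorname{Ber}(p)$ sequence $(X_i)_{i\in\N}$, to produce, measurably, an i.i.d.\ $\operatorname{Ber}(p')$ sequence $(Y_i)_{i\in\N}$ with $Y_i\ge X_i$ for all $i$; applying this independently to the two halves of $\Z$ then proves the theorem. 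This reduction is where non-equivariance enters essentially.

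Write $\mu_q$ for the law on $\{0,1\}^\N$ of an i.i.d.\ $\operatorname{Ber}(q)$ sequence. It is convenient to describe the inverse of the desired map: it suffices to construct a measurable $t:\{0,1\}^\N\to\{0,1\}^\N$ with $t(Y)_i\le Y_i$ for all $i$, with $t_*\mu_{p'}=\mu_p$, and with $t$ injective off a $\mu_{p'}$-null set. Indeed, $f:=t^{-1}$ is then measurable (the inverse of a Borel injection, by standard descriptive set theory), satisfies $f(X)_i\ge X_i$, and pushes $\mu_p$ forward to $\mu_{p'}$, so it is a one-sided thickening. Concretely $t$ amounts to selecting, deterministically and reversibly, a sub-configuration $X\subseteq Y$ of an i.i.d.\ $\operatorname{Ber}(p')$ process $Y$ that is itself i.i.d.\ $\operatorname{Ber}(p)$; here ``reversibly'' means that the deleted $1$'s of $Y$ can be recovered from $X$ alone.

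To build $t$ I would scan $Y_0,Y_1,\dots$ in order and, at each index with $Y_i=1$, decide whether to keep it ($X_i=1$) or delete it ($X_i=0$) according to a running ``instruction'' together with whatever fresh randomness the recent symbols of $Y$ have made available, updating the instruction as I proceed --- a finitary coding in the spirit of Keane--Smorodinsky, but subject to the constraint $X\subseteq Y$. The decisions should be arranged so that the gaps (maximal runs of $0$'s) of $X$ come out i.i.d.\ geometric of the parameter dictated by $\mu_p$, and so that the deletion pattern inside each gap of $X$ is encoded into the lengths of that gap and of subsequent gaps of $X$, to be read off when running $t^{-1}$. Because $h(p)\le h(p')$, a window of $X$ of length $\ell$ carries only $\asymp\ell\,h(p)$ bits while the matching window of $Y$ needs $\asymp\ell\,h(p')$ bits recorded, so one cannot encode ``in place''; the construction must instead carry a buffer of not-yet-encoded $Y$-information that grows linearly as one moves out from the origin. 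This growing buffer is precisely why $t$ cannot be shift-equivariant --- in accordance with Theorem~\ref{no} --- yet it is no obstacle to $t$ being a well-defined measurable map on $\N$.

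The crux, and the step I expect to be hardest, is the bookkeeping that makes the previous paragraph rigorous: one must design the keep/delete rule and the instruction updates so that at every finite stage (i) the committed portion of $X$ has \emph{exactly} the i.i.d.\ $\operatorname{Ber}(p)$ law, (ii) $X\subseteq Y$ is preserved, (iii) the encoding of deletions stays losslessly decodable, and (iv) the pending-information buffer never runs dry. Getting (i) to hold exactly, not merely approximately, is the delicate point; it is the discrete analogue of using the continuum of available positions inside a gap in the proof of Theorem~\ref{yes}, and I expect the bulk of the argument to be this construction together with the verification that $t$ is defined $\mu_{p'}$-a.e., pushes $\mu_{p'}$ to $\mu_p$, and is a.e.\ injective, after which $f=t^{-1}$ is the required thickening.
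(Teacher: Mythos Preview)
Your proposal is not a proof but a proof \emph{strategy}, and you say so yourself: ``the crux, and the step I expect to be hardest, is the bookkeeping that makes the previous paragraph rigorous,'' and you do not carry out that bookkeeping. The reformulation in terms of an a.e.-injective monotone thinning $t$ with $t_*\mu_{p'}=\mu_p$ is correct but tautological --- the existence of such a $t$ is exactly equivalent to the existence of a thickening $f=t^{-1}$, so nothing has been reduced. The substantive content would be the Keane--Smorodinsky-style left-to-right scan you sketch, but you have not specified the keep/delete rule, the instruction-update mechanism, or the encoding of deletions into gap lengths, nor verified that the resulting $X$ is \emph{exactly} i.i.d.\ $\operatorname{Ber}(p)$ rather than approximately so. Since this is precisely the difficulty (the entropy deficit $h(p)\le h(p')$ forces an unboundedly growing buffer, and managing it while keeping the marginal law exact is the whole problem), what you have written is an outline of where the work lies, not the work itself.

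For comparison, the paper's argument takes a quite different route and avoids sequential coding entirely. It reindexes $\N$ as $\N\times\N$, writes $X=(X^1,X^2,\ldots)$ with each $X^i\in\{0,1\}^\N$, and thickens $X^i$ using randomness extracted from $X^{i+1}$. The key device is an \emph{$\eps$-extractor}: a map $f:\{0,1\}^\N\to\{0,1\}$ with $f(X)\sim\operatorname{Ber}(\tfrac12)$ and $f(X)$ nearly independent of $\max(X,Y)$ in total variation. A true extractor (exact independence) is shown not to exist, but $\eps$-extractors do (parity of the first $k$ bits XORed with a von Neumann bit), and each comes with a \emph{corrector} that repairs the residual dependence using auxiliary random bits. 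One sets $F(X)^i=\max(X^i,\,g(f''_{i+1}(X^{i+1},\ldots)))$ where $f''$ is the corrected extractor; the recursion is circular, but taking $\eps_i=2^{-i}$ makes corrections summably rare and a Borel--Cantelli argument yields a well-defined limit. This handles all $0<p<p'<1$ uniformly --- your detour through Ball's theorem to reduce to $p'\le 1-p$ is unnecessary --- and the exactness of the output law is guaranteed structurally by the corrector rather than by a delicate encoding.
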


These three theorems answer the three questions posed by Holroyd, Lyons and Soo in \cite{HLS}, where the related problem of splitting a Poisson process was addressed. They also showed that no \emph{strongly finitary} thickening exists (equivariant or not).
The problem of thinning a Poisson process was considered earlier by Ball \cite{B1} and also expanded upon in \cite{HLS}. We refer the reader to \cite{HLS} for more background.

\subsection{Higher dimensions}
A Poisson process is well-defined over any $\sigma$-finite measure space and the definition of thickening extends naturally to this case. We note that if $\mathcal{S}$ is a measure space which is isomorphic to $\R$ then, since Poisson processes of intensity $\lambda$ on $\R$ pass to Poisson processes of intensity $\lambda$ on $\mathcal{S}$ via this isomorphism and vice versa, Theorem~\ref{yes} implies that a (\emph{non}-equivariant) thickening also exists for Poisson processes on $\mathcal{S}$ for every $\lambda'>\lambda>0$.

One may generalize Theorems~\ref{no} and \ref{yes} in two ways. First, one may consider a weaker equivariance condition by only requiring that the thickening $f$ commute with shifts $\sigma$ taken from some sub-group of all shifts. Second, one may consider a multi-dimensional setting in which $\R$ is replaced by $\R^d$. The following theorem extends our results to this more general scenario.

\begin{theorem}\label{cont_gen}
For any $d\ge 1$ and any pair of intensities $\lambda'>\lambda>0$:
\begin{enumerate}
\item There is no thickening which commutes with $d$ linearly independent shifts of $\R^d$.
\item For every $(d-1)$-dimensional linear space of shifts of $\R^d$, there exists a thickening which commutes with that space.
\end{enumerate}
\end{theorem}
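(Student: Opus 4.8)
The plan is to derive Theorem~\ref{cont_gen} by combining the one-dimensional results (Theorems~\ref{no} and \ref{yes}) with the measure-space isomorphism remark, handling the two parts separately.

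\medskip

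\textbf{Part (1): non-existence.} Suppose $f$ is a thickening of Poisson processes on $\R^d$ that commutes with $d$ linearly independent shifts $v_1,\dots,v_d$. The idea is to reduce to the $1$-dimensional setting by ``slicing'' $\R^d$ along the lattice $\Lambda = \Z v_1 + \dots + \Z v_d$. First I would note that, up to a linear change of coordinates (which carries a Poisson process of intensity $\lambda$ to a Poisson process of a possibly different but still constant intensity, and carries the $v_i$ to the standard basis vectors), we may assume $v_i = e_i$, so $f$ commutes with all integer shifts of $\R^d$. Now fix a measure-space isomorphism between the fundamental domain $[0,1)^d$ (with Lebesgue measure) and $\R$ (with Lebesgue measure); this induces an isomorphism between $\R^d = \bigsqcup_{n \in \Z^d} ([0,1)^d + n)$ and $\Z^d \times \R$, under which a Poisson process of intensity $\lambda$ on $\R^d$ becomes, on each copy $\{n\} \times \R$, an independent Poisson process of intensity $\lambda$, and the integer shifts of $\R^d$ become the shifts of the $\Z^d$ coordinate. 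Under this identification, $f$ becomes a $\Z^d$-equivariant thickening of the process $\{Y_n\}_{n \in \Z^d}$, where the $Y_n$ are i.i.d.\ Poisson processes of intensity $\lambda$ on $\R$. Restricting attention to a single coordinate direction, say varying $n_1$ while freezing $n_2,\dots,n_d$, one obtains a $\Z$-equivariant thickening of an i.i.d.\ sequence of Poisson processes on $\R$; but such a thickening acting on a $\Z$-indexed i.i.d.\ sequence of $\R$-valued Poisson data is ruled out by an entropy argument identical to the one the paper uses in the discrete case (an equivariant map cannot increase entropy, and a thickening strictly increases it), or alternatively can be reduced directly to Theorem~\ref{no} by noting that a $\Z$-equivariant thickening of the i.i.d.\ sequence $\{Y_n\}$ can be promoted to a genuinely shift-equivariant thickening on $\R$ by the standard suspension/tower construction (spread the $\Z$-indexed copies of $\R$ along the real line as consecutive unit intervals). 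The main obstacle here is making the entropy argument rigorous for processes valued in the (non-compact, non-discrete) space $\M$; I expect the cleanest route is actually the reduction to Theorem~\ref{no}, avoiding entropy altogether, by observing that any thickening commuting with $d$ independent shifts in particular commutes with one shift, and a process on $\R^d$ projected/integrated appropriately relates back to $\R$. I would pick whichever of these two reductions is technically smoothest.

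\medskip

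\textbf{Part (2): existence.} Given a $(d-1)$-dimensional space $V$ of shifts, choose a vector $w \notin V$, so that $\R^d = V \oplus \R w$. Identify $\R^d$ with $V \times \R$ via this splitting; a Poisson process of intensity $\lambda$ on $\R^d$ corresponds to a Poisson process of intensity $\lambda$ on $V \times \R$ (with the product of Lebesgue measures, up to a constant Jacobian factor which only rescales $\lambda$ and is harmless). Fix a measure-space isomorphism $\phi$ between $V$ (with its Lebesgue measure, which is $\sigma$-finite and isomorphic to $\R$) and $\R$; then $V \times \R$ is measure-isomorphic to $\R \times \R$, which is in turn measure-isomorphic to $\R$. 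Pulling back, a Poisson process on $\R^d$ corresponds to a Poisson process on $\R$, and by Theorem~\ref{yes} a (non-equivariant) thickening of the latter exists; transporting it back gives a thickening $f$ of Poisson processes on $\R^d$. It remains to arrange equivariance under $V$: here the point is to carry out the one-dimensional thickening ``fiberwise'' over $V$ in a $V$-invariant way. Concretely, rather than collapsing all of $V \times \R$ to $\R$, I would instead only collapse the $\R$-fiber: for each fixed $v \in V$, the process restricted to $\{v\} \times \R$ is a Poisson process of intensity $\lambda$ on a line, and applying the $\R$-thickening of Theorem~\ref{yes} to each fiber — using literally the same function on every fiber — produces output points whose locations depend measurably on the whole configuration but in a manner that manifestly commutes with translations in the $V$ directions, since a $V$-translation merely permutes the fibers without altering any of them. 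One must check that the union of the fiberwise outputs is a Poisson process of intensity $\lambda'$ on $\R^d$: this follows because a Poisson process on $V \times \R$ is, fiber-by-fiber, a collection of independent Poisson processes on lines (more precisely: disintegrate over $V$; conditionally on the $V$-marginal being supported on points $v_1, v_2, \dots$, the restrictions to the fibers $\{v_j\}\times\R$ are independent Poisson processes on $\R$), and applying a thickening to each fiber independently yields independent Poisson processes of the higher intensity on each fiber, whose union reassembles to a Poisson process of intensity $\lambda'$ on $V \times \R$. The measurability of the fiberwise construction, and the precise sense in which ``the same function on every fiber'' is well-defined (it requires trivializing the fiber bundle $V \times \R \to V$, which is trivial here by the chosen product splitting), are the routine points to verify; the genuine content is the fiberwise-independence structure of the Poisson process, which is standard.

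\medskip

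Thus in both parts the strategy is: change coordinates so the shift subgroup becomes standard, then either slice $\R^d$ into $\Z^d$-many copies of $\R$ to import the non-existence of Theorem~\ref{no}, or fiber $\R^d$ over the shift-space $V$ as $V \times \R$ and import the existence of Theorem~\ref{yes} fiberwise. I anticipate that the only delicate step is the entropy/reduction argument in Part (1) for $\M$-valued processes, and I would resolve it by the suspension reduction to Theorem~\ref{no} rather than by a direct entropy computation.
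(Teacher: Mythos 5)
Both halves of your proposal contain genuine gaps, and neither matches the paper's actual route.

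\textbf{Part (1).} You offer two reductions and both fail. The entropy argument cannot be used: a Poisson process has infinite entropy rate, so ``an equivariant factor cannot increase entropy'' gives no information here (indeed, if it worked, Theorem~\ref{no} itself would have a one-line proof and the paper's whole approximation argument would be unnecessary). The suspension reduction to Theorem~\ref{no} is circular: spreading the $\Z$-indexed copies of $\R$ along consecutive unit intervals converts $\Z$-equivariance of the lattice-indexed thickening into \emph{integer}-shift equivariance of a thickening on $\R$, not into full $\R$-shift equivariance (the interval endpoints break translation invariance), so you land exactly on the statement you are trying to prove (Theorem~\ref{cont_gen}(1) for $d=1$), not on Theorem~\ref{no}. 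The ``restrict to a single coordinate direction'' step is also not a thickening of a one-dimensional object, since $f$ depends on the whole configuration, not just one slice. The paper instead reruns the proof of Theorem~\ref{no} directly with weakened equivariance: one takes $\eps = 1/m$, separately $\eps/4$-approximates the $m$ events $A_0, A_\eps, \dots, A_{(m-1)\eps}$ by events $B_{i\eps}(X)$ with finite window $r_i$, sets $r := \max_i r_i$, and then uses integer-shift equivariance to define $B_{i\eps}$ for $i \ge m$ by shifting $B_{(i \bmod m)\eps}$ by $(i - (i \bmod m))\eps \in \Z$; the rest of the argument is unchanged. The multidimensional case replaces intervals by $\eps$-cubes.

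\textbf{Part (2).} The fiberwise construction is not meaningful: for any fixed $v \in V$, the fiber $\{v\} \times \R$ has Lebesgue measure zero in $V \times \R$, so almost surely the Poisson process has \emph{no} points on it. Consequently ``the process restricted to $\{v\}\times\R$ is a Poisson process of intensity $\lambda$'' is false, and the disintegration you invoke (conditioning on the $V$-marginal and getting an independent Poisson process on each fiber $\{v_j\}\times\R$) does not hold — conditional on a point of the $V$-marginal at $v_j$, the fiber contains exactly one point, not a Poisson process. The actual argument needs cells of positive $(d-1)$-dimensional measure, chosen \emph{equivariantly}: the paper takes the points of $X$ in a slab $\R^{d-1}\times[0,1]$, projects to $\R^{d-1}$, forms the Voronoi tessellation to get a translation-equivariant random partition of $\R^{d-1}$ into cells $\gamma$, applies a non-equivariant thickening to $X$ restricted to each strip $\gamma \times (\R\setminus[0,1])$ (each of which genuinely is a Poisson process), and uses extra extracted randomness to also thicken inside $\gamma \times [0,1]$. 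The Voronoi/slab device is the missing idea: it supplies a $V$-equivariant decomposition into positive-measure pieces, which your product splitting $V\times\R$ alone cannot.
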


In the discrete setting, we may also generalize Theorem~\ref{discrete}. This generalization is much easier and we include it here for the sake of completeness.

\begin{theorem}\label{disc_gen}
For any $d\ge 1$ and any pair of intensities $0<p<p'<1$:
\begin{enumerate}
\item If $p'<1-p$, there is no thickening which commutes with any $d$-dimensional lattice of shifts of $\Z^d$.
\item If $1-p<p'$, there is a thickening which commutes with all the shifts of $\Z^d$.
\item For every $(d-1)$-dimensional lattice of shifts of $\Z^d$, there exists a thickening which commutes with that space.
\end{enumerate}
\end{theorem}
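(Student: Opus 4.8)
The plan is to derive all three parts from the one-dimensional results already in hand, the passage from $\Z$ to $\Z^d$ being in each case a product construction: one applies a one-dimensional map independently along a well-chosen family of parallel lines. Part (1) will come from the Kolmogorov--Sinai entropy inequality (the same principle invoked in the introduction for the discrete analogue of Theorem~\ref{no}); part (2) from Ball's theorem \cite{B2}; part (3) from Theorem~\ref{discrete}.

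\textbf{Part (1).} Suppose $f$ commutes with every shift in a finite-index subgroup $L\le\Z^d$. Fix a fundamental domain $F$ for $L$, so $|F|=[\Z^d:L]$. Regarded as a measure-preserving system under the $L$-action, the i.i.d.$(p)$ field is isomorphic to an i.i.d. process over $L\cong\Z^d$ with alphabet $\{0,1\}^F$, whose entropy rate with respect to $L$ is $|F|\,H(p)$ with $H(t)=-t\log t-(1-t)\log(1-t)$; similarly the i.i.d.$(p')$ field has $L$-entropy rate $|F|\,H(p')$. Since $f$ is an $L$-equivariant factor map from the first system onto the second, Kolmogorov--Sinai gives $|F|\,H(p')\le|F|\,H(p)$, i.e. $H(p')\le H(p)$. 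But $0<p<p'$ together with $p'<\max(p,1-p)$ is impossible to reconcile with this: if $p>\tfrac12$ the hypothesis reads $p'<p$, contradicting $p<p'$ outright; if $p\le\tfrac12$ then $p<p'<1-p$, so $\min(p',1-p')>p$ with both quantities in $[0,\tfrac12]$ where $H$ is strictly increasing, giving $H(p')=H(\min(p',1-p'))>H(p)$. This contradiction proves (1).

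\textbf{Part (2).} Write $\Z^d=\Z\times\Z^{d-1}$ and, for $y\in\Z^{d-1}$, let $\ell_y=\{(n,y):n\in\Z\}$. By \cite{B2}, since $\max(p,1-p)<p'$ there is an equivariant one-dimensional thickening $g:\{0,1\}^\Z\to\{0,1\}^\Z$ from $p$ to $p'$. Define $f(X)$ by $f(X)|_{\ell_y}=g(X|_{\ell_y})$ for each $y$, identifying $\ell_y$ with $\Z$ via the first coordinate. The restrictions $\{X|_{\ell_y}\}_y$ are independent and $f(X)|_{\ell_y}$ depends only on $X|_{\ell_y}$, so $\{f(X)|_{\ell_y}\}_y$ are independent, each i.i.d.$(p')$; hence $f(X)$ is i.i.d.$(p')$ on $\Z^d$, and $f(X)\ge X$ coordinatewise. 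A general shift $\sigma_{(v_1,v')}$ carries $\ell_y$ to $\ell_{y-v'}$ while shifting the first coordinate by $v_1$, and combining this with the equivariance of $g$ yields $f\circ\sigma=\sigma\circ f$ for every shift $\sigma$ of $\Z^d$. This gives (2).

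\textbf{Part (3).} Let $L\le\Z^d$ have rank $d-1$, and let $\bar L=(L\otimes\Q)\cap\Z^d$ be its saturation: a primitive rank-$(d-1)$ sublattice with $L\subseteq\bar L$. Then $\Z^d/\bar L$ is torsion-free of rank one, so picking $w\in\Z^d$ that projects to a generator gives an internal direct sum $\Z^d=\bar L\oplus\Z w$. The columns $C_{\bar\ell}=\{\bar\ell+kw:k\in\Z\}$, $\bar\ell\in\bar L$, partition $\Z^d$; each is identified with $\Z$ via $k$; and every shift by an element of $L\subseteq\bar L$ permutes the columns while fixing the coordinate $k$. Repeating the construction of part (2) with columns in place of lines, but now with the \emph{non-equivariant} one-dimensional thickening $g$ supplied by Theorem~\ref{discrete}, produces a discrete thickening $f$ from $p$ to $p'$ on $\Z^d$ that commutes with every shift in $L$ (in fact in $\bar L$), which is (3). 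The only points needing care are the entropy bookkeeping in (1) — the scaling of entropy rate when a $\Z^d$-action is restricted to a finite-index subgroup — and the verification in (3) that the column decomposition is genuinely $L$-invariant, which is exactly why one passes to the saturation $\bar L$; neither is a serious obstacle, consistent with this theorem being much easier than its continuous counterpart.
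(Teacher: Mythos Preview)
Your proof is correct and follows essentially the same approach as the paper: entropy for part~(1), Ball's equivariant one-dimensional thickening applied fiberwise for part~(2), and the non-equivariant one-dimensional thickening of Theorem~\ref{discrete} applied fiberwise for part~(3). The only organizational difference is in part~(3): the paper picks any $v\in\Z^d$ linearly independent of $L$ over $\Q$, partitions $\Z^d$ into fibers $(x+kv)_{k\in\Z}$, and then explicitly chooses an origin $x_0(\phi)$ on each fiber in an $L$-equivariant way (the linear independence over $\Q$ is exactly what makes a consistent such choice possible); you instead pass to the saturation $\bar L$ to obtain a genuine direct sum $\Z^d=\bar L\oplus\Z w$, so that the canonical origin $k=0$ on each column is automatically $L$-equivariant. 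Your formulation is slightly cleaner, but the two arguments are doing the same thing.
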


As in the one-dimensional setting, the case $p'=1-p$ (for $p<\frac{1}{2}$) appears to have not been treated.

\subsection{General setup and conjecture}

We may consider our positive results as special cases of a much more
general setup. Given two probability measures $\P$ and $\Q$ on two
standard Borel spaces $S$ and $T$ (if the measures are atomless we
might as well take them both to be the uniform measure on the unit
interval $[0,1]$) and a Borel measurable relation $R\subset S \times
T$, we want to know whether there exists a \emph{deterministic
coupling satisfying $R$}, i.e. a function $f:S \to T$ such that
$f(X)\sim \Q$ whenever $X\sim \P$ and the relation is a.s.
satisfied, $\P((X,f(X))\in R)=1$.

\medskip
\noindent{\bf Question:} For which $R$ does such a deterministic coupling exist?
\medskip

Of course, in order for such a deterministic coupling to exist we
need to require that some (not necessarily deterministic) coupling
satisfying $R$ exists. For this, it is clearly necessary that Hall's
condition holds: for any Borel measurable $A\subset S$ we have
$\Q(R(A))\ge \P(A)$, where $R(A)$ is the image of $A$ under $R$,
i.e. $R(A)=\{y\in T \mid \exists x\in A \, (x,y)\in R\}$. Note that
$R(A)$ might fail to be Borel measurable, but it is analytic and
hence universally measurable, so $\Q(R(A))$ is well defined. Under
suitable topological conditions on $S,T$ and $R$, this is also
sufficient, see \cite[Theorem 3.1]{Artstein} (it is not always
sufficient, e.g., if $S=T=[0,1]$, $\P=\Q=U([0,1])$ and $R=\{(x,y)\
|\ x<y\}$).
%When the relation $R$ is a closed subset, this is equivalent to satisfying Hall's conditions (\ori{ref?}): for any $A\subset S$ we have $\Q(R(A))\ge \P(A)$, where $R(A)$ is the image of $A$ under $R$, i.e. $R(A)=\{y\in T \mid \exists x\in A \, (x,y)\in R\}$.

However, there are still cases in which a coupling exists but no
deterministic coupling exists. As an example, when $\P$ and $\Q$ are
uniform on $[0,1]$ and $R=\big\{(x,y)\in[0,1]^2\ |\ x=2y \bmod
1\big\}$, taking $Y$ to be uniform and $X=2Y \bmod 1$ yields a
coupling satisfying $R$, but it is easily verified that no
deterministic coupling satisfying $R$ exists.

Notice that in this example, while the measures $\P$ and $\Q$ are atomless, in the given coupling $(X,Y)$ the distribution of $X$ conditioned on $Y$ has atoms (in fact, it is atomic). We say that a coupling $(X,Y)$ is \emph{conditionally atomless} if the distribution of $X$ conditioned on $Y$ is atomless for almost all values of $Y$.

\medskip
\noindent{\bf Conjecture:} Given a relation $R$, if there exists a conditionally atomless coupling satisfying $R$, then there also exists a deterministic coupling satisfying $R$.
\medskip

For example, Theorem~\ref{discrete} confirms the conjecture in the special case where $S=T=\{0,1\}^\Z$, $R=\{(a,b)\in S\times T\ |\ \forall n\ a_n\le b_n\}$, $\P=\P_{p}^\Z$ and $\Q=P_{p'}^\Z$ in which case a conditionally atomless coupling is given by $X_n:=Y_n Z_n$ where $Y\sim\P_{p'}^\Z$ and $Z\sim\P_{p/p'}^\Z$ independently.

Another special case of this conjecture has been resolved by Bollob\'as and Varopoulos \cite{BV} who show that this conjecture holds when $\Q$ is purely atomic (in which case having a conditionally atomless coupling is equivalent to $\P$ itself being atomless). Our own methods may be generalized to some cases in which the given spaces $S,T$ are product spaces and the relation $R$ is a product relation. Additional examples can be adapted from the results of Angel, Holroyd and Soo \cite{AHS} on Poisson thinning in finite volume. However, the general case remains open.

We are unsure whether additional topological assumptions are required in the conjecture (as in Hall's condition). For example, one may need to assume that $S$ and $T$ are Polish and the relation $R$ is closed. All our examples except the Bollob\'as-Varopoulos theorem are of this type.

\section{Non-existence of equivariant thickenings}

\begin{proof}[Proof of Theorem~\ref{no}]
For simplicity, we will set $2=\lam'>\lam=1$, but the proof works just as well for any $\lam'>\lam>0$.

Assume, in order to obtain a contradiction, that there exists an equivariant thickening $f:\M\to\M$. Let $X$ be a Poisson process of intensity 1 and let $Y:=X\cup f(X)$. We assume, WLOG, that $X \cap f(X) = \emptyset$, since we can always replace $f(X)$ by $f'(X)=f(X)\setminus X$, which also satisfies $Y=X\cup f'(X)$. Now split $Y$ into two disjoint sets $Y=Y_1 \cup Y_2$ by randomly and independently assigning each point of $Y$ to $Y_1$ or $Y_2$ with probability $\frac12$.

Let $Z$ be a Poisson process of intensity 2, and split it similarly into $Z_1$ and $Z_2$. The resulting distribution on $(Z_1, Z_2)$ is simply the distribution of 2 independent Poisson processes of intensity 1. We will show that the distribution of $(Y_1,Y_2)$ differs from that of $(Z_1,Z_2)$ by constructing an event which has different probabilities under these two distributions. This will imply that the distribution of $Y=X\cup f(X)$ is different from that of $Z$, since the splitting process is the same.

Informally, we consider the possibility that the splitting $(Y_1,Y_2)$ of $Y$ coincides with $(X,f(X))$ on some large interval. On this event, there is another large interval on which $f(Y_1)$ is much more correlated with $Y_2$ than what we would get for $f(Z_1)$ and $Z_2$. The equivariance condition enters in ensuring that the probability of this event decays only exponentially fast in the length of the interval on which the correlation holds.

For a Borel $S\subset \R$, let $\cF_S$ denote the $\sigma$-algebra of the restriction of $\M$ to $S$, i.e. all the events which depend only on the points of the process which are in $S$.

%For $\eps>0$ and $t\in\R$, define $A_t = A_{t,\eps}\subseteq\M$ as the
%set of all $U\in\M$ having at least one point in $[t,t+\eps]$.
For $\eps>0$, $t\in\R$ and $U\in \M$, let
$$
A_t(U) = A_{t,\eps}(U) = \begin{cases} 1&U\cap [t,t+\eps]\neq\emptyset\\
0& \text{otherwise}\end{cases}.
% A_t(U) = \mbox{``$U$ has at least one point in $[t,t+\eps]$''} \ .
$$

\begin{claim}
As $\eps \ra 0$ we have
$$\E [A_0(f(X))]=\eps(1+o(1)) \, .$$
\end{claim}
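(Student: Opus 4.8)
The plan is to sandwich $\P(A_0(f(X)))$ between $\eps - O(\eps^2)$ and $\eps$ using only the first two moments of the relevant point counts. Write $N := \#\big(f(X)\cap[0,\eps]\big)$, $N_X := \#\big(X\cap[0,\eps]\big)$ and $M := \#\big(Y\cap[0,\eps]\big)$ where $Y = X\cup f(X)$. Since by assumption $X\cap f(X)=\emptyset$, we have the pointwise identity $N = M - N_X$; since $Y$ is a Poisson process of intensity $2$ and $X$ is a Poisson process of intensity $1$, the variables $M$ and $N_X$ are Poisson with means $2\eps$ and $\eps$ respectively, so taking expectations (all of which are finite) gives $\E N = 2\eps - \eps = \eps$.

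For the upper bound, $\P(A_0(f(X))) = \P(N\ge 1)\le \E N = \eps$ by Markov's inequality. For the lower bound I would use the second-order Bonferroni inequality in the form $\one_{\{N\ge1\}}\ge N - \binom{N}{2}$, valid for every nonnegative integer-valued random variable, so that $\P(N\ge 1)\ge \E N - \E\binom{N}{2}$. Because $f(X)\subseteq Y$ we have $0\le N\le M$, and $k\mapsto\binom{k}{2}$ is nondecreasing on the nonnegative integers, hence $\E\binom{N}{2}\le\E\binom{M}{2} = \tfrac{1}{2}(2\eps)^2 = 2\eps^2$, the last equality being the second factorial moment of a Poisson variable. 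Combining, $\P(A_0(f(X)))\ge\eps - 2\eps^2$.

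Putting the two bounds together yields $\eps - 2\eps^2\le\P(A_0(f(X)))\le\eps$, which is precisely $\P(A_0(f(X))) = \eps(1+o(1))$ as $\eps\to0$. There is no real obstacle here: the only points requiring a little care are the disjointness of $X$ and $f(X)$ --- which is part of the standing assumptions and justifies the identity $N = M - N_X$ --- the finiteness of the expectations, and the monotonicity of $\binom{\cdot}{2}$ used to dominate $N$ by $M$. It is worth noting that equivariance of $f$ is not used in this claim; it enters only later, where it is needed to control the decay rate of the event on which the splitting of $Y$ coincides with $(X,f(X))$.
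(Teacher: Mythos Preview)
Your proof is correct. The upper bound is identical to the paper's. For the lower bound the paper takes a slightly more elementary route: since $Y=X\cup f(X)$, one has the inclusion $A_0(Y)\subseteq A_0(X)\cup A_0(f(X))$, hence
\[
\P(A_0(f(X)))\ge \P(A_0(Y))-\P(A_0(X))=(1-e^{-2\eps})-(1-e^{-\eps})=\eps(1+o(1)),
\]
avoiding any second-moment computation. Your Bonferroni argument with $\E\binom{N}{2}\le\E\binom{M}{2}=2\eps^2$ is equally valid and even yields an explicit error term; it just uses one more ingredient (the second factorial moment of a Poisson) where the paper gets by with a set inclusion. Your closing remark that equivariance is not used here is also accurate.
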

\begin{proof}
On the one hand
\begin{equation*}
\E [A_0(f(X))]\le
\E|f(X)\cap[0,\eps]|=\E|Y\cap[0,\eps]|-\E|X\cap[0,\eps]|=\eps \ .
\end{equation*}
On the other hand
\begin{equation*}
\E [A_0(f(X))]\ge\E [A_0(Y)]-\E [A_0(X)]=\eps(1+o(1)) \ .\qedhere
\end{equation*}
\end{proof}

We continue under the assumption that $\eps>0$ is small enough so
that $\eps/2 < \E [A_0(f(X))] < 2 \eps$.

Now, the random variable $A_0(f(X))$ can be $(\eps/4)$-approximated
by some indicator random variable $B_0(X)$ measurable with respect
to $\cF_{[-r,r]}$ for some $r<\infty$, i.e. $B_0(X)$ takes only the
values 0 and 1, depends only on the points of $X$ in $[-r,r]$ and
\begin{equation*}
  \P(A_0(f(X))\neq B_0(X))<\eps/4\,.
\end{equation*}
In particular $\E[B_0(X)] \le 4\eps$. We note that $B_0$, as a
function on $\M$, is defined only up to null sets of the Poisson
distribution with intensity 1, and define it on all of $\M$ by
choosing some arbitrary representative. Defining
$C_0(U)=A_0(f(U))\cdot B_0(U)$ we have that
\begin{equation*}
\E[C_0(X)] \ge \eps/4\,.
\end{equation*}
Defining $B_t(U)=B_0(\sigma_t(U))$ and $C_t(U)=C_0(\sigma_t(U))$,
where $\sigma_t:\R\ra\R$ is translation by $t$, we have by our
equivariance assumption that $B_t(X)$ is measurable with respect to
$\cF_{[-r+t,r+t]}$ and approximates $A_t(f(X))$ similarly.

For some $L$, consider the events $\{B_{i \eps}(U)\}_{i=0}^{L-1}$
and note that they all belong to $\cF_{[-r,L \eps +r]}$. Let
\begin{equation*}
  b(U)=\sum_{i=0}^{L-1} B_{i\eps}(U)\, ,
\end{equation*}
so that $\E[b(X)]=\E[B_0(X)] L \le 4 \eps L$. By ergodicity of $X$
with respect to the shift by $\eps$, we get that
\begin{equation}\label{eq:ergodic_b}
\P(b(X)<5 \eps L )\ra 1\text{ as }L\ra \infty\,.
\end{equation}

Similarly, letting
\begin{equation*}
  c(U)=\sum_{i=0}^{L-1} C_{i\eps}(U)\, ,
\end{equation*}
then $\P(c(X)>\eps L /8)\ra 1$ as $L\ra\infty$ (although these
events do not necessarily belong to $\cF_{[-r,L \eps +r]}$).

For $\eps>0$ and $U,V \in \M$, let $D_t(U,V)=B_t(U) \cdot A_t(V)$
and let
\begin{equation*}
  d(U,V)=\sum_{i=0}^{L-1} D_{i\eps}(U,V)\, .
\end{equation*}
Notice that $D_t(U,f(U))=B_t(U) \cdot A_t(f(U))=C_t(U)$. In
particular,
\begin{equation}\label{eq:ergodic_d}
  \P(d(X,f(X))> L \eps/8) \ra 1\text{ as }L\ra\infty\, .
\end{equation}

Finally, let
\begin{equation*}
  E(U,V) = \begin{cases} 1&b(U)<5\eps L\text{ and } d(U,V)>\eps
  L/8\\ 0&\text{otherwise}\end{cases}.
\end{equation*}

We claim that this event distinguishes between $(Y_1,Y_2)$ and
$(Z_1,Z_2)$. Informally, this follows from the fact that since
$B_t(Y_1)$ (unlike $A_t(f(Y_1))$ which it approximates) is a
function of $Y_1|_{[t-r,t+r]}$ and hence on the event that
$Y_1|_{[-r,L\eps+r]}=X|_{[-r,L\eps+r]}$, an event whose probability
is only exponentially small in $L\eps+2r$, the probability of
$E(Y_1,Y_2)$ is relatively high. On the other hand, since $Z_1$ and
$Z_2$ are independent, it is very unlikely that $B_t(Z_1)$ and
$A_t(Z_2)$ will occur simultaneously for many times $t$ and
consequently, for suitable choices of $L$ and $\eps$, the
probability of $E(Z_1,Z_2)$ becomes much smaller than that of
$E(Y_1,Y_2)$. More formally, the theorem follows from the next two
claims.

\begin{claim} For every $\eps>0$ we have
\begin{equation*}
\E[E(Y_1,Y_2)]\ge 2^{-1-3(L \eps + 2 r)}
\end{equation*}
if $L$ is sufficiently large.
\end{claim}

\begin{proof}
Define
\begin{equation*}
\begin{split}
\Omega_1&:=\{X\cup f(X)\text{ has at most $3(L \eps + 2 r)$ points in $[-r,L\eps+r]$}\},\\
\tau&:=\min\{t\ge -r\ |\ X\cup f(X)\text{ has $\lfloor 3(L\eps+2r)\rfloor$ points in $[-r,t]$}\},\\
%\Omega_2&:=\{Y_1|_{[-r,L\eps+r]}=X|_{[-r,L\eps+r]}\} \\
\Omega_2&:=\{Y_1|_{[-r,\tau]}=X|_{[-r,\tau]}\}.
\end{split}
\end{equation*}
We note that $\Omega_2$ depends only on the decisions of the
splitting process for the first $\lfloor 3(L\eps+2r)\rfloor$ points
to the right of $-r$, hence $\Omega_2$ is independent of $X$ and
\begin{equation}\label{eq:Omega_2}
  \P(\Omega_2) = 2^{-\lfloor 3(L\eps+2r)\rfloor} \ge 2^{-3(L \eps + 2
r)} \, .
\end{equation}

In addition, we note that on the events $\Omega_1$ and $\Omega_2$, we have $(Y_1|_{[-r,L\eps+r]},Y_2|_{[-r,L\eps+r]})=(X_{[-r,L\eps+r]}, f(X)|_{[-r,L\eps+r]})$. We conclude that
%and notice that $\Omega_2$ is also equivalent to $\{Y_2|_{[-r,L\eps+r]}=f(X)|_{[-r,L\eps+r]}\}$.
%We conclude that as $L\ra\infty$, we have
\begin{equation}\label{eq:E_cond}
\begin{split}
%&\P(\Omega_1) \ra 1,\\
&\E[E(Y_1,Y_2)\ |\ \Omega_1, \Omega_2]=\E[E(X,f(X))\ |\
\Omega_1,\Omega_2] = \E[E(X,f(X))\ |\ \Omega_1] \to 1
\end{split}
\end{equation}
as $L\to\infty$ since $\P(\Omega_1) \ra 1$ by ergodicity and
$\E[E(X,f(X))]\ra 1$ by \eqref{eq:ergodic_b} and
\eqref{eq:ergodic_d}. Since $\Omega_1$ and $\Omega_2$ are
independent we have
\begin{equation*}
  \E[E(Y_1,Y_2)] \ge \P(\Omega_1)\P(\Omega_2)\E[E(Y_1,Y_2)\ |\ \Omega_1,
  \Omega_2].
\end{equation*}
Thus the claim follows from \eqref{eq:Omega_2} and
\eqref{eq:E_cond}.
\end{proof}
%we have
%%Hence, for sufficiently large $L$ we have
%\begin{equation}\label{Y12_bound}
%\P(E(Y_1,Y_2))\ge 2^{-1-3(L \eps + 2 r)} \ .
%\end{equation}

\begin{claim} For every $\eps>0$ and $L\ge 1$ we have
\begin{equation*}
\E[E(Z_1,Z_2)]\le 2^{5\eps L}\eps^{\eps L/8}.
\end{equation*}
\end{claim}
\begin{proof}
Let $T=\{0\le i\le L-1\ |\ B_{i\eps}(Z_1)=1\}$ so that $|T| =
b(Z_1)$. Since $Z_1, Z_2$ are independent Poisson processes of
intensity 1 and by the definition of $A_t$, we have
\begin{equation*}
\begin{split}
\P(&d(Z_1, Z_2)\ge m\ |\ T)=\P\big(\exists R\subseteq T, |R|=m\text{ such that }\prod_{i\in R} A_{i\eps}(Z_2)=1\ |\ T\big)\le \\
&\le \sum_{R\subseteq T, |R|=m} \P\big(\prod_{i\in R}
A_{i\eps}(Z_2)=1\ |\ T\big) = \sum_{R\subseteq T, |R|=m}
\P\big(\prod_{i\in R} A_{i\eps}(Z_2)=1\big) \le 2^{|T|}\eps^m.
\end{split}
\end{equation*}

Therefore,
\begin{equation*}
  \P(d(Z_1,Z_2)>\eps L/8\ |\ b(Z_1) < 5\eps L) \le 2^{5 \eps L} \eps^{\eps L
  /8}.
\end{equation*}
Thus the claim follows from the definition of $E(Z_1,Z_2)$.
\end{proof}

Comparing the estimates of the last two claims for small enough
$\eps>0$ and large enough $L$ shows that $(Y_1,Y_2)$ and $(Z_1,Z_2)$
do not have the same distribution, yielding a contradiction to the
existence of $f$.
\end{proof}

\section{Existence of non-equivariant thickenings}\label{non_existence_proof_sec}

The proofs of Theorems \ref{yes} and \ref{discrete} are essentially the same. We will first prove Theorem \ref{discrete} and then discuss the changes needed to prove Theorem \ref{yes}.

Let $0<p<p'<1$ be fixed. For $0\le r\le 1$, denote by $\P_r$ the distribution of a $\{0,1\}$-valued random variable with expectation $r$, and let $\P_r^I$ be a set of i.i.d. $\P_r$ random variables, indexed by $I$. Our goal is to construct a measurable $f:\{0,1\}^\Z \ra \{0,1\}^\Z$ such that if $X \sim \P_p^\Z$ then $f(X) \sim \P_{p'}^\Z$ and a.s. for all $i\in\Z$ we have $f(X)_i \ge X_i$.

Since we don't require equivariance, the specific choice of index set plays no role beyond its cardinality. That is, there is no difference between a (discrete) thickening on $\{0,1\}^\Z$, as in the statement of Theorem~\ref{discrete}, and a thickening on $\{0,1\}^\N$ or $\{0,1\}^{\N\times \N}$ (which are defined analogously). To be more specific, let $n:\N\times \N \ra \N$ be a bijection and let $h:\{0,1\}^{\N} \ra \{0,1\}^{\N \times \N}$ be the isomorphism defined by $h(X)_{ij}=X_{n(i,j)}$. If $f$ is a thickening of $\P_p^\N$ into $\P_{p'}^\N$, then $h \circ f \circ h^{-1}$ is a thickening of $\P_{p}^{\N\times\N}$ into $\P_{p'}^{\N\times\N}$ and vice versa.

Another useful fact is that $\P_r^\N$ and $\P_s^\N$ are isomorphic (as measure spaces), for any $0<r,s<1$. Let $g$ be such an isomorphism taking $\P_{\frac12}^\N$ into $\P_{q}^\N$, where $q:=\frac{p'-p}{1-p}$. $q$ is chosen so that if $x\sim \P_p$ and $y\sim \P_q$ are independent then $\max(x,y)\sim \P_{p'}$.

We define an \emph{extractor} to be a function $f:\{0,1\}^\N \ra \{0,1\}$ such that if $X\sim \P_{p}^\N$ and $Y \sim \P_{q}^\N$ are independent then
\begin{align*}
f(X) &\sim \P_\frac12\qquad\text{ and}\\
f(X)\mbox{ and }\max&(X,Y)\mbox{ are independent.}
\end{align*}
where $\max(X,Y)$ is taken coordinate-wise. We remark that this is different from the extractor which is sometimes used in the computer science literature.

How are extractors useful? First, notice that given independent $X\sim\P_p^\N$ and $Y\sim\P_q^\N$, by rearranging indices (using the function $n$ above) one can extract infinitely many bits from $X$, i.e. one can get a function $f:\{0,1\}^\N \ra \{0,1\}^\N$, such that $f(X) \sim \P_{\frac12}^\N$ independently of $\max(X,Y)$. Second, by applying $g$ we can get a sequence distributed $\P_{q}^\N$. Now, to thicken $\P_{p}^{\N\times\N}$ into $\P_{p'}^{\N\times\N}$, define $F:\{0,1\}^{\N\times\N}\to\{0,1\}^{\N\times\N}$ by
$$F(X)^i=\max(X^i,g(f(X^{i+1})))$$
where for $U\in\{0,1\}^{\N\times\N}$ we write $U^i$ for $U(i,\cdot)$.

\begin{claim} \label{thickening}
If $f$ is an extractor, $F$ is a thickening.
\end{claim}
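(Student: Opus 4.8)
The plan is to verify directly the two requirements in the definition of a discrete thickening for the map $F$ on the index set $\N\times\N$. The domination $F(X)_{ij}\ge X_{ij}$ is immediate, since $F(X)^i=\max(X^i,g(f(X^{i+1})))\ge X^i$ coordinatewise, and measurability of $F$ follows from that of $f$, $g$ and the bijection $n$. Hence everything lies in showing that $F(X)\sim\P_{p'}^{\N\times\N}$ whenever $X\sim\P_p^{\N\times\N}$. Write $\phi:=g\circ f:\{0,1\}^\N\to\{0,1\}^\N$, where $f:\{0,1\}^\N\to\{0,1\}^\N$ is the function extracting infinitely many bits obtained above from an extractor by rearranging indices, so that $F(X)^i=\max(X^i,\phi(X^{i+1}))$. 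The defining properties of an extractor, together with the choice of $q$, yield the statement which is all we shall use: \emph{if $U\sim\P_p^\N$ and $V\sim\P_q^\N$ are independent, then $\phi(U)$ and $\max(U,V)$ are independent, with $\phi(U)\sim\P_q^\N$ and $\max(U,V)\sim\P_{p'}^\N$.} Recall also that the rows $X^1,X^2,\dots$ of $X$ are i.i.d.\ $\P_p^\N$. Since the law of $F(X)$ is determined by the joint distributions of finitely many of its rows, it suffices to prove, for every $N\ge1$, that $F(X)^1,\dots,F(X)^N$ are i.i.d.\ $\P_{p'}^\N$.

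This I would prove by downward induction on $m=N,N-1,\dots,1$ of the assertion $H(m)$: ``$\phi(X^m),F(X)^m,F(X)^{m+1},\dots,F(X)^N$ are mutually independent, with $\phi(X^m)\sim\P_q^\N$ and $F(X)^j\sim\P_{p'}^\N$ for $m\le j\le N$''. The base case $H(N)$ is precisely the extractor property above with $U:=X^N$ and $V:=\phi(X^{N+1})$: since $X^{N+1}$ is independent of $X^N$ we have $V\sim\P_q^\N$ independent of $X^N$, and the property then gives that $\phi(X^N)\sim\P_q^\N$ is independent of $F(X)^N=\max(X^N,\phi(X^{N+1}))\sim\P_{p'}^\N$. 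For the inductive step, assume $H(m)$. The row $X^{m-1}$ is independent of $X^m,\dots,X^{N+1}$, hence independent of the tuple $(\phi(X^m),F(X)^m,\dots,F(X)^N)$, each entry of which is a function of $X^m,\dots,X^{N+1}$; together with $H(m)$ this makes $X^{m-1},\phi(X^m),F(X)^m,\dots,F(X)^N$ all mutually independent, so in particular the pair $(X^{m-1},\phi(X^m))$ is independent of $(F(X)^m,\dots,F(X)^N)$. Applying the extractor property with $U:=X^{m-1}$ and $V:=\phi(X^m)$ — a legitimate choice, since these are independent with the required marginals — we obtain that the pair $(\phi(X^{m-1}),F(X)^{m-1})=(\phi(X^{m-1}),\max(X^{m-1},\phi(X^m)))$ has the product law $\P_q^\N\otimes\P_{p'}^\N$ and, being a function of $(X^{m-1},\phi(X^m))$, is independent of $(F(X)^m,\dots,F(X)^N)$. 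Using the elementary fact that if two independent random tuples each have mutually independent components then all the components of both are mutually independent, we conclude $H(m-1)$.

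Carrying the induction down to $m=1$, the assertion $H(1)$ contains in particular that $F(X)^1,\dots,F(X)^N$ are mutually independent, each distributed $\P_{p'}^\N$; as $N$ was arbitrary this gives $F(X)\sim\P_{p'}^{\N\times\N}$, so $F$ is a thickening. The only genuine idea here — and the step I expect to require the most care in the write-up — is the choice of the induction hypothesis $H(m)$: one must notice that the term $\phi(X^{m+1})$ already built into $F(X)^m$ furnishes exactly the fresh $\P_q^\N$ ``seed'' that the extractor property demands when one next peels off $F(X)^{m-1}$, which is why the auxiliary variable $\phi(X^m)$ has to be propagated through the induction. The rest is routine bookkeeping of mutual independence.
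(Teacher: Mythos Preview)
Your proof is correct and follows essentially the same approach as the paper: both argue by downward induction on the row index, using the extractor property at each step to decouple the newly produced row from the previously established ones. The only cosmetic difference is that the paper's induction hypothesis tracks just the independence of $f(X^{k+1})$ from $\{F(X)^i\}_{i=k+1}^j$ (handling the marginals separately), whereas your $H(m)$ bundles the marginals and the mutual independence of $\phi(X^m),F(X)^m,\dots,F(X)^N$ into one statement; this makes your inductive step slightly cleaner to read but is the same argument.
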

\begin{proof}
First note that for each $i\in\N$, $F(X)^i\sim\P_{p'}^\N$ by definition of $f$ and $g$ and since $X^i$ and $X^{i+1}$ are independent. Thus, to prove the claim, it is sufficient to show that for every integer $j\ge 1$,
\begin{equation}\label{joint_F_independence}
\text{$\{F(X)^1, F(X)^2, ..., F(X)^{j-1}, F(X)^j\}$ are jointly independent.}
\end{equation}

We first claim that for each integer $j\ge 1$,
\begin{equation}\label{f_F_joint_independence}
\text{$\{X^1, X^2, ..., X^{j-1}, f(X^j), F(X)^j\}$ are jointly independent.}
\end{equation}
To see \eqref{f_F_joint_independence}, note that since $\{X^i\}_{i\ge1}$ are jointly independent and $(f(X^j), F(X)^j)$ is measurable with respect to $(X^i)_{i\ge j}$, it is sufficient to show that $f(X^j)$ is independent from $F(X)^j$. This follows from the definition of extractor.

We now prove \eqref{joint_F_independence} by induction on $j$. For $j=1$ there is nothing to prove. Assume \eqref{joint_F_independence} holds for $j=k-1$ and let us prove it for $j=k$. Since $(F(X)^i)_{1\le i\le k-1}$ is measurable with respect to $(X^1, X^2, ..., X^{k-1}, f(X^k))$, it follows from \eqref{f_F_joint_independence} that $(F(X)^i)_{1\le i\le k-1}$ is independent from $F(X)^k$. It remains to show that $\{F(X)^i\}_{1\le i\le k-1}$ are jointly independent which follows from our induction hypothesis.\qedhere

%First note that for each $i\in\N$, $F(X)^i\sim\P_{p'}^\N$ by definition of $f$ and $g$ and since $X^i$ and $X^{i+1}$ are independent. Thus, to prove the claim, it is sufficient to show that for each $j\in\N$, the sequences $\{F(X)^i\}_{i=1}^j$ are jointly independent (noting that in our convention, $0\notin\N$), which is equivalent to $F(X)^{k}$ being independent of $\{F(X)^i\}_{i=k+1}^j$ for each $1\le k\le j-1$. Finally, for any $j\in\N$ and $1\le k\le j-1$, using the fact that $X^{k}$ is independent of $\{X^i\}_{i=k+1}^{j+1}$, and by the definition of $F$, the last claim would follow if we show that $f(X^{k+1})$ is independent of $\{F(X)^i\}_{i=k+1}^j$.
%
%Hence we fix $j\in\N$ and will prove by downward induction that for $1\le k\le j-1$, $f(X^{k+1})$ is independent of $\{F(X)^i\}_{i=k+1}^j$. For $k=j-1$, this follows immediately from the definition of extractor. Assume the induction hypothesis holds for some $2\le k+1\le j-1$ and let us prove it for $k$. By the induction hypothesis, $f(X^{k+2})$ is independent of $\{F(X)^i\}_{i=k+2}^j$. Thus, since $\{F(X)^i\}_{i=k+2}^j$ depend only on $(X^i)_{i=k+2}^{j+1}$ and since the $(X^i)_{i=k+1}^{j+1}$ are jointly independent, we deduce that $f(X^{k+1})$ is independent of $\{F(X)^i\}_{i=k+1}^j$ from the definition of extractor. This proves the induction and the claim.\qedhere
\end{proof}

All that is left, then, is to construct an extractor. Unfortunately, such an object does not exist.

\begin{lemma} \label{no-extractor}
There is no extractor.
\end{lemma}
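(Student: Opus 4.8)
The plan is to show that an extractor $f$ would be forced to satisfy $f(X)=\tfrac12$ almost surely, which is impossible since $f$ takes values in $\{0,1\}$. Throughout, let $X\sim\P_p^\N$ and $Y\sim\P_q^\N$ be independent and put $M:=\max(X,Y)$, so that $M\sim\P_{p'}^\N$ by the choice of $q$; the extractor hypotheses then say exactly that $f(X)\sim\P_{1/2}$ and that $f(X)$ is independent of $M$. For $n\ge1$ write $\cF_n:=\sigma(X_1,\dots,X_n)$ and, for $a\in\{0,1\}^n$, $g_n(a):=\E[f(X)\mid X_1=a_1,\dots,X_n=a_n]$, a version of $\E[f(X)\mid\cF_n]$. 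I would aim to prove that $g_n\equiv\tfrac12$ for every $n$ and then let $n\to\infty$.

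The key step concerns conditioning $M$ on a finite cylinder. Fix $b\in\{0,1\}^n$ and condition on $\{M_1=b_1,\dots,M_n=b_n\}$, which has positive probability. Since the pairs $(X_i,Y_i)$ are independent across $i$, since $\{M_i=0\}=\{X_i=0,\,Y_i=0\}$ and $\P(X_i=1\mid M_i=1)=p/p'$, and since $(X_i)_{i>n}$ is independent of $(X_i,Y_i)_{i\le n}$, the conditional law of $X$ given this event is the product measure that is the point mass at $0$ on each coordinate $i\le n$ with $b_i=0$, equals $\P_{p/p'}$ on each coordinate $i\le n$ with $b_i=1$, and equals $\P_p$ on each coordinate $i>n$. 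Using the independence of $f(X)$ and $M$ and decomposing according to the value of $(X_1,\dots,X_n)$, this gives, for every $b\in\{0,1\}^n$,
\begin{equation*}
\begin{split}
\tfrac12 &= \E\big[f(X)\mid M_1=b_1,\dots,M_n=b_n\big]\\
&= \sum_{a\le b}\Big(\prod_{i\le n,\ b_i=1}(p/p')^{a_i}(1-p/p')^{1-a_i}\Big)\,g_n(a),
\end{split}
\end{equation*}
where $a\le b$ means $a_i\le b_i$ for every $i$.

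Next I would induct on the number of ones $|b|$ to conclude $g_n(b)=\tfrac12$ for all $b\in\{0,1\}^n$. When $|b|=0$ the sum has the single term $g_n(b)$ with coefficient $1$, so $g_n(b)=\tfrac12$ at once. For the inductive step, every $a\le b$ with $a\ne b$ satisfies $|a|<|b|$, hence $g_n(a)=\tfrac12$ by the inductive hypothesis; since the displayed coefficients form a probability distribution on $\{a:a\le b\}$ and the coefficient of $a=b$ is $(p/p')^{|b|}$, the identity collapses to $\tfrac12=\tfrac12\big(1-(p/p')^{|b|}\big)+(p/p')^{|b|}g_n(b)$, and since $0<p<p'$ forces $(p/p')^{|b|}\ne0$ we get $g_n(b)=\tfrac12$. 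Thus $\E[f(X)\mid\cF_n]=\tfrac12$ for all $n$; as $\bigcup_n\cF_n$ generates $\sigma(X)$ and $f(X)$ is bounded and $\sigma(X)$-measurable, the martingale convergence theorem gives $f(X)=\lim_{n\to\infty}\E[f(X)\mid\cF_n]=\tfrac12$ almost surely, contradicting $f(X)\in\{0,1\}$.

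The only real content is the second step: identifying the conditional law of $X$ given a finite cylinder of $M$ as an explicit product measure and then observing that the resulting linear system relating those conditional probabilities to the cylinder averages $g_n$ of $f(X)$ is triangular (in $|b|$) with nonzero diagonal, so that the obvious solution $g_n\equiv\tfrac12$ is the unique one. Everything else — positivity of the conditioning events, the facts $q\in(0,1)$ and $p/p'\in(0,1)$, and the concluding martingale limit — is routine.
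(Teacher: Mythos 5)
Your proof is correct and takes essentially the same approach as the paper: condition $\max(X,Y)$ on finite cylinders, exploit the extractor hypothesis together with the fact $\P(X_i=1\mid \max(X_i,Y_i)=1)=p/p'$ to obtain a triangular linear system for the cylinder-conditioned averages $g_n(a)$, and induct on the number of ones to conclude $\E[f(X)\mid\cF_n]\equiv\tfrac12$. You spell out the martingale-convergence step that the paper leaves implicit, but this is a cosmetic difference.
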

\begin{proof}
Let $X\sim \P_{p}^\N$ and $Y \sim \P_{q}^\N$ be independent and define $Z:=\max(X,Y)$. Assume that $f$ is an extractor. We will reach a contradiction by showing that $f(X)$ is independent of $\{X_i\}_{i=1}^k$ for any integer $k$.

Fix $k\ge 1$. For $U\in\{0,1\}^{\N}$, let $A(U)$ be the event $\wedge_{i\le k} (U_i=0)$. Since $f$ is an extractor, $f(X)|A(Z) \sim \P_{\frac12}$, but $A(Z)=A(X)\wedge A(Y)$, and $X$ and $Y$ are independent, so the distribution of $X|A(Z)$ is the same as $X|A(X)$, so $f(X)|A(X) \sim \P_{\frac12}$.

Now, for $1\le j\le k$, let $A_j(U)$ be the event $\wedge_{i\le k , i\ne j} (U_i=0) \wedge (U_j=1)$. Again, $f(X)|A_j(Z) \sim \P_{\frac12}$, but now $X|A_j(Z)$ is $\frac{p}{p'} X|A_j(X) + (1-\frac{p}{p'}) X| A(X)$ (that is, $X_i=0$ for $i\le k, i\neq j$ and $X_j\sim\P_{p/p'}$), since $\P(X_j=1|Z_j=1)=\frac{p}{p'}$. We already know that $f(X)|A(X)\sim \P_{\frac12}$, so we conclude that also $f(X)|A_j(X)\sim \P_{\frac12}$.

Proceed by induction on the number of $1$'s among $\{Z_i\}_{i=1}^k$ to show that conditioned on any sequence of values for $\{X_i\}_{i=1}^k$, $f(X)$ is distributed $\P_{\frac12}$.
\end{proof}

Fortunately, one can make do with something that is only almost an extractor, though the way it is used will be a bit more complicated. An $\eps$-\emph{extractor} is a function $f:\{0,1\}^\N \ra \{0,1\}$ such that if $X\sim \P_p^\N$ and $Y \sim \P_q^\N$ are independent then
\begin{align*}
f(X)&\sim \P_{\frac12}\qquad\text{ and}\\
\E d_{\text{TV}}\Big(\LL\big(f(X)\ \big|\ &\max(X,Y)\big),\P_\frac12\Big)<\eps,
\end{align*}
 where $\LL(f(X)\ |\ \max(X,Y))$ is the law of $f(X)$ conditioned on $\max(X,Y)$ and $d_\text{TV}(\LL_1,\LL_2)$ is the total variation distance between the laws $\LL_1$ and $\LL_2$. That is, observing $\max(X,Y)$ gives us little information on $f(X)$. Learning from our previous experience, we first verify the existence of $\eps$-extractors.

\begin{lemma} \label{yes-extractor}
For any $\eps>0$ there is an $\eps$-extractor.
\end{lemma}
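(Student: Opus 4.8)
The plan is to construct an $\eps$-extractor by a greedy coupon-collector-type argument on a finite window of coordinates, where the key idea is that although a single coordinate $X_i$ leaks at most $O(p)$ worth of information about a cleverly chosen bit, we can iterate and reduce the leakage below any $\eps$ by using sufficiently many coordinates. First I would reduce to a finite problem: instead of a function of the whole sequence $X \in \{0,1\}^\N$, I will build $f$ to depend only on $X_1, \dots, X_N$ for some large $N = N(\eps)$. Since $\max(X,Y)$ factors coordinatewise, conditioning on $\max(X,Y)$ is equivalent to conditioning on $\max(X_i, Y_i)$ for $i = 1, \dots, N$ (the remaining coordinates being independent of $f(X)$ and hence irrelevant). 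For each coordinate $i$, conditioning on $\max(X_i,Y_i) = 0$ forces $X_i = 0$, while conditioning on $\max(X_i,Y_i) = 1$ leaves $X_i \sim \P_{p/p'}$; so the conditional law of $(X_1,\dots,X_N)$ given $\max(X,Y)|_{[N]}$ is a product of Bernoullis, each either deterministically $0$ or $\P_{p/p'}$, and the expected number of ``live'' (i.e. $\P_{p/p'}$) coordinates is $N p'$.

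The heart of the construction is then a finite extractor lemma: for every $\eps > 0$ there is $N$ and a function $g: \{0,1\}^N \to \{0,1\}$ such that (i) $g$ is balanced under $\P_{p/p'}^{[N]}$ reweighted appropriately — more precisely I want $g(X) \sim \P_{1/2}$ under the \emph{unconditional} law, where $X \sim \P_p^{[N]}$ — and (ii) for \emph{every} subset $S \subseteq [N]$ and every assignment of the coordinates outside $S$, the conditional law of $g(X)$ given $\{X_i : i \notin S\}$ is within total variation $\delta(|S|)$ of $\P_{1/2}$, with $\delta(m) \to 0$ as $m \to \infty$. Then I would take $N$ large enough that $\delta$ is small except on an event of small probability in the number of live coordinates (which concentrates around $Np'$ by the law of large numbers), and bound the expected total variation distance by splitting on whether the number of live coordinates exceeds, say, $Np'/2$. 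To build such a $g$ I would use an averaging/XOR-type construction: partition $[N]$ into many blocks, within each block apply a function that is exactly $\P_{1/2}$ and depends on all coordinates of the block symmetrically, and then combine blocks so that controlling any bounded set of coordinates only perturbs a bounded fraction of blocks. A clean choice is to let $g(X) = \bigoplus_{j} \mathbf{1}[X \text{ restricted to block } j \text{ lies in } T_j]$ where each $T_j$ is chosen (via a $\P_{1/2}$-balancing set, which exists by a discrete intermediate-value / pigeonhole argument since the atoms of $\P_p^{\text{block}}$ can be made small) so that the block-indicator is $\P_{1/2}$; an XOR of independent $\P_{1/2}$ bits is $\P_{1/2}$, and fixing a few coordinates changes the conditional law of only the blocks meeting those coordinates, each of which moves by at most total variation $1$, so the XOR's conditional law moves by at most the number of affected blocks times a geometric factor — actually better, since XOR with even one still-uniform independent bit is exactly uniform, the conditional law of $g(X)$ given coordinates touching at most $k$ blocks is \emph{exactly} $\P_{1/2}$ as long as at least one block is entirely untouched.

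That last observation is the crux and it simplifies everything: if $g$ is an XOR over $B$ disjoint blocks each of size $s$, with $N = Bs$, then conditioning on any set of coordinates that misses at least one full block leaves $g(X)$ exactly $\P_{1/2}$. In our application the ``conditioned-on'' coordinates are exactly the dead coordinates (those $i$ with $\max(X_i,Y_i)=0$), and the number of dead coordinates is $\Bin(N, 1-p')$, so the probability that the dead set covers \emph{every} block is at most $B(1-p')^s$ by a union bound. Choosing $s$ with $(1-p')^s < \eps/(2B)$ — say $s = s(\eps,B)$ and then $B$ a fixed modest constant, or simply $s$ large and $B=1$ won't work since $B=1$ gives no untouched block; take $B=2$, $s$ large — makes $\E d_{\text{TV}}(\LL(g(X)\mid \text{dead set}), \P_{1/2}) \le \P(\text{some dead set covers a whole block}) \le B(1-p')^s < \eps$, since on the complementary event the conditional law is exactly $\P_{1/2}$ and on the bad event we bound the distance by $1$. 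Finally I would lift $g$ back to $\{0,1\}^\N$ trivially (ignore all coordinates beyond $N$) and check the unconditional distribution $g(X) \sim \P_{1/2}$ holds by construction of the $T_j$, and that conditioning on $\max(X,Y)$ rather than on the dead/live pattern only adds the harmless extra randomness of $\P_{p/p'}$ on the live coordinates, which can only decrease the conditional information. The main obstacle I anticipate is the existence of the $\P_{1/2}$-balancing sets $T_j$ within a block: one needs a subset of $\{0,1\}^s$ whose $\P_p^{[s]}$-mass is exactly $1/2$, and since the atoms $p^a(1-p)^{s-a}$ are not dyadic this requires either working with slightly larger blocks and a continuity/pigeonhole argument, or allowing $g$ to use one auxiliary exactly-uniform bit extracted elsewhere (which we have plenty of), or settling for mass within $2^{-s}$ of $1/2$ and absorbing that error into $\eps$ as well — the cleanest route is probably the last, keeping the $\eps$-bookkeeping honest throughout.
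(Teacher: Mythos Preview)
Your proposal has a genuine gap at the step you call ``the crux.'' You claim that if some block is entirely untouched by dead coordinates (i.e.\ all its coordinates are live), then its indicator bit $\mathbf{1}[X|_{\text{block }j}\in T_j]$ is still exactly $\P_{1/2}$ conditionally on $\max(X,Y)$, so the XOR is exactly uniform. But you yourself computed that on live coordinates the conditional law of $X_i$ is $\P_{p/p'}$, not $\P_p$. An all-live block therefore has law $\P_{p/p'}^{[s]}$, and since $T_j$ was chosen to have $\P_p^{[s]}$-mass $1/2$, there is no reason its $\P_{p/p'}^{[s]}$-mass equals $1/2$. Your closing remark that ``conditioning on $\max(X,Y)$ rather than on the dead/live pattern only adds harmless extra randomness'' is a confusion: conditioning on $\max(X,Y)$ \emph{is} conditioning on the dead/live pattern, and it genuinely shifts every live marginal from $\P_p$ to $\P_{p/p'}$; there is no intermediate conditioning under which untouched blocks keep law $\P_p$. (Separately, your parameter regime and union bound are backwards: the bad event is ``every block contains at least one dead coordinate,'' whose probability is $(1-(p')^s)^B$, which tends to $1$, not $0$, when $B$ is fixed and $s\to\infty$; you would need $B$ large, not $s$.)

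The paper handles exactly this difficulty by giving up on an exactly uniform conditional bit and instead driving the bias down with a long XOR. It takes $a_k=\bigoplus_{i=1}^k X_i$; conditioned on $\max(X,Y)$, the dead coordinates contribute $0$ and the $\ell_k$ live ones are i.i.d.\ $\P_{p/p'}$, so the conditional bias of $a_k$ is $\tfrac12(1-2p/p')^{\ell_k}$, which is small once $\ell_k$ is large (and $\ell_k$ is large with high probability for large $k$). This gives the second $\eps$-extractor condition. The first condition, exact unconditional $\P_{1/2}$, is then obtained by XOR-ing $a_k$ with a von Neumann bit $b_k$ extracted from coordinates beyond $k$; since $b_k$ is conditionally independent of $a_k$, the XOR can only be closer to uniform than $a_k$ alone. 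If you want to repair your block approach, you would have to balance $T_j$ under $\P_{p/p'}$ rather than $\P_p$, take $B$ large, and then add a von Neumann bit for the unconditional law --- at which point the blocks are doing nothing that plain parity does not already do more simply.
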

\begin{proof}
Fix $\eps>0$. For an integer $k\ge1$, let $a_k$ be the parity of the first $k$ values of $X$, i.e. $a_k:=\bigoplus_{i=1}^k X_i$. Let $\ell_k:=\sum_{i=1}^k \max(X_i,Y_i)$. Then it is readily verified, using the Fourier transform, that $\E(a_k|\max(X,Y)) \sim \P_{(1-(1-2\frac{p}{p'})^{\ell_k})/2}$. Let $\ell'$ be such that $\left|1-2\frac{p}{p'}\right|^{\ell'} < \eps/2$ and fix $k$ large enough so that $\P(\ell_k<\ell') < \eps/2$. Then, observing that $d_{\text{TV}}\Big(\LL\big(a_k\ |\ \max(X,Y)\big),\P_\frac12\Big)=|\P\big(a_k=1\ |\ \max(X,Y)\big)-\frac{1}{2}|$, we have
\begin{equation*}
\E d_{\text{TV}}\Big(\LL\big(a_k\ |\ \max(X,Y)\big),\P_\frac12\Big)< \P(\ell_k \ge \ell') \eps/2 + \P(\ell_k<\ell') 1 \le \eps.
\end{equation*}

Hence, $a_k$ satisfies the second requirement of $\eps$-extractor. To get the first requirement, let $m=m(X)$ be the minimal positive integer such that $X_{k+2m}\ne X_{k+2m+1}$ and let $b_k:=X_{k+2m}$. Then $b_k\sim\P_{\frac12}$ and is independent of $a_k$, both unconditionally and conditionally on $\max(X,Y)$. Therefore, $f(X):=a_k\oplus b_k$ (where $a_k\oplus b_k$ is defined to be 1 iff $a_k$ is different from $b_k$) satisfies both requirements of being an $\eps$-extractor.
\end{proof}

Of course, we cannot simply replace the extractors from the proof of Claim~\ref{thickening} with $\eps$-extractors, since one might learn something about the output bits of the $\eps$-extractors by observing the thickening of the bits from which they were extracted. We will therefore introduce a ``correction'' mechanism for these bits.

Given an $\eps$-extractor $f$, a \emph{corrector} for $f$ is a function $f':\{0,1\}^\N \times \{0,1\}^\N \times\{0,1\}^\N \ra \{0,1\}$ such that when $X\sim \P_p^\N$ , $Y\sim \P_q^\N$ and $Z \sim \P_\frac12^\N$ are independent, the following properties hold:
\begin{align}
f(X) &\oplus f'(X,Y,Z)  \sim \P_\frac12,\label{corrector_prop}\\
f(X) \oplus f'(X,Y,Z) &\mbox{ and } \max(X,Y) \mbox{ are independent},\label{corrector_ind_prop}\\
\E(f'&(X,Y,Z)) < \eps\label{corrector_dist_prop}
\end{align}
(where, again, $a\oplus b$ is defined to be 1 iff $a$ is different from $b$)
\begin{claim} \label{corrector}
For any $\eps$-extractor, there is a corrector.
\end{claim}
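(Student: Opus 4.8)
The plan is to realize $f'$ as an explicit randomized correction of the single bit $f(X)$: I would use the fresh fair bits $Z$ to nudge the conditional law of $f(X)$ given $W:=\max(X,Y)$ back to exactly $\P_{\frac12}$, arranging moreover that $f'$ only ever takes the value $1$ (we never force a bit ``down''), so that the expected correction cost equals the total-variation defect of $f$ and is therefore $<\eps$.

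First I would set up notation. Let $W:=\max(X,Y)$ and regard the conditional probability $\theta(W):=\P(f(X)=1\mid W)$ as a fixed Borel function $\theta$ of $W$. Since $f(X)$ is $\{0,1\}$-valued, $d_{\text{TV}}\big(\LL(f(X)\mid W),\P_{\frac12}\big)=\big|\theta(W)-\tfrac12\big|$, so the hypothesis that $f$ is an $\eps$-extractor says precisely that $f(X)\sim\P_{\frac12}$ and $\E\big|\theta(W)-\tfrac12\big|<\eps$.

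Next I would use $Z$ to form $V:=\sum_{i\ge1}Z_i2^{-i}$, which is uniform on $[0,1]$ and, depending only on $Z$, is independent of $(X,Y)$; hence after conditioning on any value $W=w$ the variable $V$ remains uniform and independent of the bit $f(X)$. I then define, measurably in $(X,Y,Z)$,
\[
f'(X,Y,Z):=\begin{cases}\one\!\left\{\,f(X)=0\ \text{ and }\ V<\dfrac{\frac12-\theta(W)}{1-\theta(W)}\right\}, & \theta(W)\le\tfrac12,\\[4mm]\one\!\left\{\,f(X)=1\ \text{ and }\ V<\dfrac{2\theta(W)-1}{2\theta(W)}\right\}, & \theta(W)>\tfrac12.\end{cases}
\]
Both thresholds lie in $[0,\tfrac12]$, so this is well defined, and on either branch $f'=1$ is possible only when the correction actually pushes the bit ``up''.

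Finally I would verify the three required properties by a one-line conditional computation. Fix $w$ and write $\theta:=\theta(w)$. If $\theta\le\tfrac12$ then
\[
\P\big(f(X)\oplus f'=1\ \big|\ W=w\big)=\theta+(1-\theta)\cdot\frac{\frac12-\theta}{1-\theta}=\tfrac12,
\]
and the symmetric computation gives the same value when $\theta>\tfrac12$; hence $\LL\big(f(X)\oplus f'\mid W=w\big)=\P_{\frac12}$ for every $w$, which is exactly \eqref{corrector_prop} together with \eqref{corrector_ind_prop}. On either branch $\E\big(f'\mid W\big)=\big|\theta(W)-\tfrac12\big|$, so $\E f'=\E\big|\theta(W)-\tfrac12\big|<\eps$, giving \eqref{corrector_dist_prop}. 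I do not expect a genuine obstacle here; the only points requiring care are the measurability of $\theta$ (handled by fixing a Borel version of the conditional probability) and the fact that $V$ stays conditionally uniform and independent of $f(X)$ given $W$ (immediate from $Z$ being independent of $(X,Y)$). The design choice of always XOR-ing by $1$ — rather than sometimes forcing $f(X)\oplus f'=0$ — is precisely what makes the correction cost equal $\E\,d_{\text{TV}}\big(\LL(f(X)\mid\max(X,Y)),\P_{\frac12}\big)$ and hence $<\eps$.
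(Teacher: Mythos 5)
Your construction is correct and essentially identical to the paper's: both turn $Z$ into a uniform $V\in[0,1]$ and define $f'$ to flip the conditionally more likely value of $f(X)$ with probability $|\theta(W)-\tfrac12|/\max(\theta(W),1-\theta(W))$, which makes $f(X)\oplus f'$ exactly $\P_{\frac12}$ given $\max(X,Y)$ and makes $\E f'$ equal the expected total-variation defect. The paper packages your two-case definition into the single condition $U(Z)\cdot\P\big(f(X')=f(x)\mid\max(X',Y')=\max(x,y)\big)>\tfrac12$, which unwinds to exactly your case split; the only slip is your prose aside that ``we never force a bit down,'' which is literally false (on the branch $\theta(W)>\tfrac12$ your $f'$ does flip $1$ to $0$), but this plays no role in the argument.
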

\begin{proof}
Let $f$ be an $\eps$-extractor and $X\sim \P_p^\N$ , $Y\sim \P_q^\N$
and $Z \sim \P_\frac12^\N$ be independent. Define
$g:\{0,1\}\times\{0,1\}^\N\to [0,1]$ by
\begin{equation*}
  g(s,m) = \P\left(f(X)=s\ \big|\ \max(X,Y)=m\right).
\end{equation*}
Let $U:\{0,1\}^\N \ra [0,1]$ be defined by $U(z):=\sum_{i=1}^\infty
z_i 2^{-i}$ so that $U(Z)$ is a uniform random variable on $[0,1]$.
Now for $x,y,z\in \{0,1\}^\N$, we define $f'$ as
$$ f'(x,y,z)=\left\{\begin{array}{lll}
1 & U(z) g(f(x),\max(x,y)) > \frac12 \\
0 & \mbox{otherwise}
\end{array}\right.$$

Let us motivate informally the definition of $f'$. For $m\in\{0,1\}^\N$, consider the event $\Omega_m=\{\max(X,Y)=m\}$. Given $\Omega_m$ we have the random variable $f(X) | \Omega_m$ whose distribution is close to $\P_{\frac12}$ and we want that $f(X)\oplus f'(X,Y,Z) | \Omega_m$ will be exactly $\P_{\frac12}$. Furthermore, we want $\E(f'(X,Y,Z)\ |\ \Omega_m)$ to be small. Now, if $\alpha\ge \frac{1}{2}$ and $a \sim \P_\alpha$ we have $d_{TV}(\LL(a),\P_{\frac12})=\alpha-\frac12$. Defining $b$ to be 0 if $a=0$ and 1 with probability $1-\frac{1}{2\alpha}$ if $a=1$, it is easy to check that $a \oplus b \sim \P_{\frac12}$ and that $\E(b)=\alpha-\frac12$, which is the minimal possible given that $a \oplus b \sim \P_{\frac12}$. This is exactly what the above definition does (in an analogous way for the case $\alpha<\frac12$), where the extra independent randomness is provided by $Z$.

%By the definition of corrector, we should have that for every $m$, $f(X) \oplus f'(X,Y,Z)\sim \P_\frac12$ conditioned on $\Omega_m$. Suppose that for some $m$, $f(X)\not \sim \P_\frac12$ conditioned on $\Omega_m$, and suppose WLOG that $\alpha:=\P(f(X)=0\ |\ \Omega_m)>\frac{1}{2}$. Note that if $x\sim\P_\alpha$ and $y\sim\P_{\frac{1}{2\alpha}}$ independently then $x\oplus y\sim\P_{\frac{1}{2}}$. We may correct the distribution of $f$ on $\Omega_m$ by defining $f'(X,Y,Z)$ for $(X,Y)\in\Omega_m$ as follows: we set it to $0$ if $f(X)=1$ and we set it to $1$ with probability $\frac{1}{2\alpha}$ if $f(X)=0$ (using the independent randomness provided by $Z$). This is exactly what the above definition provides.

%This can then be corrected by having $f'(X,Y,Z)=1$
%By definition of correct, we require that $f(X) \oplus f'(X,Y,Z)$ and $\max(X,Y)$ are independent, and that $f(X) \oplus f'(X,Y,Z)\sim \P_\frac12$. Thus, if for some if $a\in\{0,1\}$ and $m\in\{0,1\}^\N$, we have $\P(f(X)=a\ |\ \max(X,Y)=m)>\frac{1}{2}$, then

Indeed, to verify formally that $f'$ is a corrector for $f$, we fix
$m \in \{0,1\}^\N$ and define $\alpha:=g(0,m)$. If $\alpha\ge
\frac{1}{2}$ then
\begin{equation*}
\begin{split}
\P&\left(f(X)=0 ,\ f'(X,Y,Z)=0 \ \big| \ \max(X,Y)=m\right)=\\
&=\alpha\cdot\P\left(f'(X,Y,Z)=0\ \big|\ f(X)=0,\
\max(X,Y)=m\right)=\\
&=\alpha\cdot\P\left(U(Z)\, g(f(X),\max(X,Y))\le \frac{1}{2}\ \big|\
f(X)=0,\
\max(X,Y)=m\right)=\\
&=\alpha\cdot\P\left(U(Z)\, g(0,m)\le \frac{1}{2}\right)=\alpha\cdot\P\left(U(Z)\alpha\le \frac{1}{2}\right)=\alpha\cdot \frac{1}{2 \alpha} = \frac12,\\
\end{split}
\end{equation*}
and similarly
\begin{equation*}
\begin{split}
&\P\left(f(X)=0 ,\ f'(X,Y,Z)=1 \ \big| \ \max(X,Y)=m\right)= \alpha(1-\frac{1}{2\alpha})=\alpha-\frac{1}{2},\\
&\P\left(f(X)=1 ,\ f'(X,Y,Z)=0 \ \big| \ \max(X,Y)=m\right)= (1-\alpha)\cdot 1=1-\alpha.
\end{split}
\end{equation*}
Thus
\begin{equation*}
\begin{split}
&\P\left(f(X)\oplus f'(X,Y,Z) = 0\ \big|\ \max(X,Y)=m\right) = \frac{1}{2}\quad\text{and}\\
&\P\left(f'(X,Y,Z) = 1\ \big|\ \max(X,Y)=m\right)=\left|\alpha-\frac{1}{2}\right|.
\end{split}
\end{equation*}
These two equalities follow analogously in the case $\alpha<\frac{1}{2}$. Hence, $f(X)\oplus f'(X,Y,Z) \sim \P_\frac12$, independently of $\max(X,Y)$, verifying \eqref{corrector_prop} and \eqref{corrector_ind_prop}. In addition, since by definition
\begin{equation*}
d_{TV} \left(\LL\left(f(X)\ \big|\ \max(X,Y)=m\right) , \P_\frac12 \right)=\left|\alpha-\frac{1}{2}\right|,
\end{equation*}
we see that
\begin{equation*}
\P(f'(X,Y,Z) = 1)=\E d_{TV} \left(\LL\left(f(X)\ \big|\ \max(X,Y)=m\right) , \P_\frac12 \right)<\eps,
\end{equation*}
since $f$ is an $\eps$-extractor. This verifies \eqref{corrector_dist_prop} and proves the claim.
\end{proof}

As before, we need more than a single bit. An $\eps$-\emph{extractor into} $\{0,1\}^\N$ is a function $f:\{0,1\}^\N \ra \{0,1\}^\N$ such that if $X\sim \P_p^\N$ and $Y \sim \P_q^\N$ are independent then
\begin{align*}
f(X)&\sim \P_{\frac12}^\N\qquad\text{ and}\\
\E d_\text{TV}\Big(\LL\big(f(X)\ &|\ \max(X,Y)\big),\P_\frac12^\N\Big)<\eps.
\end{align*}
To construct an $\eps$-extractor into $\{0,1\}^\N$ we take a sequence of functions $f_i:\{0,1\}^\N \ra \{0,1\}$ such that $f_i$ is an $\eps 2^{-i}$-extractor and define
$$f(X)_i=f_i(h(X)^i) \ ,$$
where we recall that $h$ is an isomorphism taking $\{0,1\}^\N$ into $\{0,1\}^{\N\times\N}$. That this results in an $\eps$-extractor follows easily from the (equivalent) definition of the total variation distance $d_{TV}(\LL_1,\LL_2)$ as the minimum of $\P(X\ne Y)$ over all possible joint distributions $(X,Y)$ where $\LL(X)=\LL_1$ and $\LL(Y)=\LL_2$. Thus, given two infinite sequences of distributions $\LL_1^n$ and $\LL_2^n$, the total variation distance between $\prod_n \LL_1^n$ and $\prod_n \LL_2^n$ is bounded by the sum of distances $\sum_n d_{TV}(\LL_1^n,\LL_2^n)$ since one may take, for each $n$ independently, a coupling $(X_n,Y_n)$ between $\LL_1^n$ and $\LL_2^n$ which minimizes $\P(X_n\ne Y_n)$ and then define $(X,Y)$ as $((X_n,Y_n)_n)$ so that $\P(X \ne Y)\le \sum_n \P(X_n \ne Y_n)$ by a simple union bound.

For $f$, an $\eps$-extractor into $\{0,1\}^\N$, one calls $f':\{0,1\}^\N \times \{0,1\}^\N \times \{0,1\}^\N \ra \{0,1\}^\N$ a \emph{corrector}, if when $X\sim \P_p^\N$, $Y\sim \P_q^\N$ and $Z \sim \P_\frac12^\N$ are independent, the following properties hold:
\begin{align*}
f(X) &\oplus f'(X,Y,Z)  \sim \P_\frac12^\N,\\
f(X) \oplus f'(X,Y,Z) &\mbox{ and } \max(X,Y) \mbox{ are independent},\\
\P(f'(X,Y,Z)&\ne (0,0,\ldots) ) < \eps
\end{align*}
(where $(U\oplus V)_i:=U_i\oplus V_i$).

Existence of correctors can be proved by the methods of Claim~\ref{corrector}. Furthermore, if the $\eps$-extractor is constructed as above, as a sequence of $\eps 2^{-i}$-extractors, then one can take a corresponding sequence of correctors to get a corrector for this $\eps$-extractor.

Given an $\eps$-extractor, $f$, and an associated corrector, $f'$, one defines the \emph{corrected extractor}, $f'':\{0,1\}^\N \times \{0,1\}^\N \times \{0,1\}^\N \ra \{0,1\}^\N$, to be $f''(X,Y,Z):=f(X)\oplus f'(X,Y,Z)$.
Corrected extractors are very similar to extractors. The difference is that they depend, though rather weakly, on extra bits (and also, unlike extractors, they exist).
We need an analogue of Claim~\ref{thickening} for corrected extractors.
\begin{claim}\label{corrected_thickening}
Fix $n\in\N$ and let $X\sim \P_p^{\{1,2,\ldots, n\}\times\N}$, $Y^{n}\sim \P_q^\N$ and $Z^{n} \sim \P_\frac12^\N$ be all jointly independent. For each $1\le i\le n$, let $\eps_i>0$ and let $f''$ be a corrected extractor for some $\eps_i$-extractor.
%Fix $n\in\N$, denote $Y^{n+1}:=Y'$ and $Z^{n+1}:=Z'$ and
Define $F(X)^n:=\max(X^n,Y^n)$ and for $1\le i\le n-1$, define (by downward induction on $i$)
\begin{equation*}
\begin{split}
Y^i &:=g(h(f''_{i+1}(X^{i+1},Y^{i+1}, Z^{i+1}))^1), \\
Z^i &:=h(f''_{i+1}(X^{i+1},Y^{i+1}, Z^{i+1}))^2, \\
F(X)^i &:=\max(X^i,Y^i)
\end{split}
\end{equation*}
(where $g$ and $h$ were defined at the beginning of this section).
Then
\begin{equation*}
(F(X)^i)_{i=1}^{n}\sim\P_{p'}^{\{1,2,\ldots, n\}\times\N}.
\end{equation*}
\end{claim}
\begin{proof}
The proof is very similar to the proof of Claim~\ref{thickening}. First note by downward induction on $1\le i\le n$, the joint independence of the $(X^i)$ and the properties of corrected extractors that $(X^1,\ldots, X^i, Y^i, Z^i)$ are jointly independent, $Y^i\sim \P_q^\N$ and $Z^i \sim \P_\frac12^\N$. Thus $F(X)^i\sim\P_{p'}^\N$ for each $1\le i\le n$ and, by the properties of corrected extractors, $(X^1,\ldots, X^i, Y^i, Z^i, F(X)^{i+1})$ are jointly independent for $1\le i\le n-1$. Since $(F(X)^j)_{j=1}^i$ are measurable with respect to $(X^1,\ldots, X^i, Y^i, Z^i)$, we deduce that $(F(X)^i)_{i=1}^n$ are jointly independent, as required.
\end{proof}

We are now prepared to prove our theorem.

\begin{proof}[Proof of Theorem~\ref{discrete}]
First, by using $h$ we transfer the problem to thickening $\P_{p}^{\N\times\N}$ into $\P_{p'}^{\N\times\N}$.

For $i\in \N$, let $f_i$ be a $\frac{1}{2^i}$-extractor into $\P_\frac12^{\N}$, Let $f'_i$ be a corresponding corrector, and let $f''_i$ be the resulting corrected extractor.

We would like to make the following definitions: for $i\in\N$
\begin{equation*}
\begin{split}
Y^i &:=g(h(f''_{i+1}(X^{i+1},Y^{i+1}, Z^{i+1}))^1), \\
Z^i &:=h(f''_{i+1}(X^{i+1},Y^{i+1}, Z^{i+1}))^2, \\
F(X)^i &:=\max(X^i,Y^i). \\
\end{split}
\end{equation*}
Then Claim~\ref{corrected_thickening} would show $F$ is a thickening. Alas, this is not well defined since for each $i$, $(Y^i,Z^i)$ depend on $(Y^{i+1},Z^{i+1})$ and so on ad infinitum. However, since corrections are rare, there is a way to make sense of the above definitions, as follows.

For $n \in \N$ define $Y_n$ and $Z_n$ by
\begin{align}\label{Y_n_def}
(Y_n)^i&:=\left\{\begin{array}{ll}
g(h(f_{i+1}(X^{i+1}))^1) & \mbox{if $i\ge n$}\\
g(h(f''_{i+1}(X^{i+1},(Y_n)^{i+1},(Z_n)^{i+1}))^1) & \mbox{if $i<n$}
\end{array}\right.\\\label{Z_n_def}
(Z_n)^i&:=\left\{\begin{array}{ll}
h(f_{i+1}(X^{i+1}))^2 & \mbox{if $i\ge n$}\\
h(f''_{i+1}(X^{i+1},(Y_n)^{i+1},(Z_n)^{i+1}))^2 & \mbox{if $i<n$}
\end{array}\right.
\end{align}
In other words, we use $f$ (without correction) instead of $f''$ when $i\ge n$. Since $f$ depends only on $X$, this yields, for any $n\in\N$, well defined sequences, $Y_n$ and $Z_n$.

\begin{claim}\label{Y_n_Z_n_convergence_claim}
$Y_n$ and $Z_n$ a.s. converge (pointwise) as $n\ra \infty$ to limits $Y$ and $Z$ satisfying for each $i\in\N$:
\begin{enumerate}
\item $(X^1,\ldots, X^i, Y^i, Z^i)$ are jointly independent.
\item $Y^i\sim \P_q^\N$ and $Z^i \sim \P_\frac12^\N$.
\item $Y^i=g(h(f''_{i+1}(X^{i+1},Y^{i+1}, Z^{i+1}))^1)$ and $Z^i=h(f''_{i+1}(X^{i+1},Y^{i+1}, Z^{i+1}))^2$.
\end{enumerate}
\end{claim}

The theorem follows from this claim, since letting $F(X)^i :=\max(X^i,Y^i)$ we obtain that $F$ is a thickening by Claim~\ref{corrected_thickening}.\qedhere

%The theorem follows from this claim, since if we let $Y$ and $Z$ be the limits of $Y_n$ and $Z_n$ respectively, and let $F(X)^i :=\max(X^i,Y^i)$, then $F$ is easily seen to be a thickening by the inductive argument of Claim~\ref{thickening}.\qedhere
\begin{proof}[Proof of Claim~\ref{Y_n_Z_n_convergence_claim}]
The first two properties of the Claim hold for $(Y_n,Z_n)$ by their definition and the properties of corrected extractor and hence will hold for any possible limit of $(Y_n,Z_n)$. To see that $(Y_n,Z_n)$ converge and to check the third property in the Claim, we consider the probability that $(Y_n, Z_n) = (Y_{n+1}, Z_{n+1})$.

%What is the probability that $(Y_n, Z_n) = (Y_{n+1}, Z_{n+1})$?

First, notice that $\big((Y_n)^i, (Z_n)^i\big)=\big((Y_{n+1})^i, (Z_{n+1})^i\big)$ for any $i>n$ and if $\big((Y_n)^n, (Z_n)^n\big)=\big((Y_{n+1})^n, (Z_{n+1})^n\big)$ then we have $\big((Y_n)^i, (Z_n)^i\big)=\big((Y_{n+1})^i, (Z_{n+1})^i\big)$ for all $i$, by backward induction on $i$.

Using that $f_{n+1}$ is a $2^{-(n+1)}$-extractor, $f_{n+1}'$ is a corrector for $f_{n+1}$ and the definition of the corrected extractor $f_{n+1}''$, we get for each $n\in\N$,
\begin{equation*}
\begin{split}
\P\Big(\big((Y_n&)^n, (Z_n)^n\big)\neq\big((Y_{n+1})^n,(Z_{n+1})^n\big)\Big) = \\
&=\P(f_{n+1}(X^{n+1})\neq f''_{n+1}(X^{n+1},(Y_{n+1})^{n+1},(Z_{n+1})^{n+1}))=\\
&=\P(f'_{n+1}(X^{n+1},(Y_{n+1})^{n+1},(Z_{n+1})^{n+1})\ne (0,0,\ldots)) < \frac{1}{2^{n+1}}.
\end{split}
\end{equation*}
The sum of these probabilities is finite and hence, there exists a.s. an $m\in \N$ such that $\big(Y_n,Z_n\big)=\big(Y_{n+1},Z_{n+1}\big)$ for all $n>m$. Thus $(Y_n, Z_n)$ converge a.s. and the third property of the Claim holds for the limit since, by definition, it holds for $(Y_n^i, Z_n^i)$ for $n>i$.
%, proving the claim.
\end{proof}
\end{proof}

To adapt this argument to prove Theorem~\ref{yes} one needs to construct an $\eps$-extractor from a Poisson process (instead of from $\{0,1\}^\N$). To do this let $a$ be the parity of the number of points in $X|_{[-r,r]}$ and let $b:=\frac{1}{2}+\frac{1}{2}\sgn(\min(X|_{(r,\infty)})+\max(X|_{(-\infty,-r)}))$. Then for $r$ large enough $a\oplus b$ is an $\eps$-extractor. Note that Lemma~\ref{no-extractor} also holds in this context; the proof is by induction on the number of points of $Z|_{[-r,r]}$.

Two other ingredients are needed: The first, a (measure space) isomorphism \quad $h:\M \ra \M^\N$ taking a Poisson process of intensity 1 into countably many independent Poisson processes of intensity 1, can be induced from an isomorphism $n:\R \ra \R\times\N$. The second is an isomorphism $g:\{0,1\}^\N \ra \M$, taking $\P_{\frac12}^\N$ into a Poisson process of intensity 1.

{\bf Remark:}
Note that the proof shows that we may obtain, in addition to the thickened process $F(X)$, infinitely many extra $\P_{\frac12}$ bits which are functions of $X$ and independent of $F(X)$ (for example, we may take $h(f''_{i+1}(X^{i+1},Y^{i+1}, Z^{i+1}))^3$ where $f''$, $Y$ and $Z$ are as defined in the proof of Theorem~\ref{discrete}). This will be useful in the proof of Theorem~\ref{cont_gen}.

%(for example, one may define $Y_n'$ and $Z_n'$ as in \eqref{Y_n_def} and \eqref{Z_n_def}, but with the indices $1$ and $2$ replaced by $3$ and $4$ respectively. The limit of $Y_n'$ will be independent of $F(X)$). This will be useful in the proof of Theorem~\ref{cont_gen}.

\section{Higher dimensions} \label{highdim}

\begin{proof}[\textbf{Proof of Theorem~\ref{disc_gen}}]
The first part follows from entropy considerations, just as in the one-dimensional case.

For the second part, one may simply partition $\Z^d$ into ``fibers'' of the form $(x+ke_1)_{k\in\Z}$, where $e_1=(1,0,\ldots,0)$, and apply the one-dimensional discrete equivariant thickening constructed in \cite{B2} to each fiber separately.

The third part follows similarly. If $L$ is a $(d-1)$-dimensional lattice in $\Z^d$, we first choose some $v\in\Z^d$ which is linearly independent of $L$ (over $\Q$). Then we partition $\Z^d$ into ``fibers'' of the form $(x+k v)_{k\in\Z}$. For each fiber $\phi$, we choose, in some arbitrary way, a unique representative $x_0(\phi)\in\phi$ such that if $\phi_1$ and $\phi_2$ are two fibers satisfying $\phi_2=\phi_1+u$ for some $u\in L$, then $x_0(\phi_2)=x_0(\phi_1)+u$ (here, we use the linear independence condition). Finally, we apply the one-dimensional thickening given by Theorem~\ref{discrete} separately on each fiber $\phi$, taking the origin of that fiber to be $x_0(\phi)$.
\end{proof}

\begin{proof}[\textbf{Proof of Theorem~\ref{cont_gen}, part 2}]
First, consider the existence of a thickening equivariant with respect to $d-1$ independent shifts. The proof in this case is similar to the third part of Theorem~\ref{disc_gen}. By applying a linear transformation, we may assume without loss of generality that these shifts are by the first $d-1$ unit vectors. One can then partition $\R^d$, up to measure 0, into strips of the form $[i_1,i_1+1]\times [i_2, i_2 +1] \times \cdots \times [i_{d-1},i_{d-1}+1] \times \R$ (with $i_1,\ldots, i_{d-1}\in\Z$), and use the same non-equivariant thickening in each of these strips. The existence of a non-equivariant thickening in a strip is guaranteed either by constructing it directly, by the methods of Theorem~\ref{yes}, or by noting (as in the introduction) that the strip and $\R$ are isomorphic as measure spaces, and this induces an isomorphism between the corresponding Poisson processes.

It is only slightly harder to see how to construct a thickening equivariant with respect to all shifts in some $(d-1)$-dimensional linear space. Again, we may assume WLOG that the space of shifts is simply $\R^{d-1}$ (the subspace spanned by the first $d-1$ coordinates). Let $X'$ be all the points of $X$ which fall inside the slab $\R\times\R\times\cdots\times\R\times[0,1]$. Let $X''$ be the projection of $X'$ onto $\R^{d-1}$. Use $X''$ to equivariantly partition $\R^{d-1}$, up to measure 0, into countably many cells, e.g. by taking the Voronoi tessellation. Then for each cell $\gamma$ we have that $X$ restricted to each ``strip'' $\gamma \times (\R\setminus [0,1])$ is a Poisson process. We apply a (non-equivariant) thickening to each of these strips, but also extract some extra bits (as in the remark at the end of section~\ref{non_existence_proof_sec}) and use them to add points in $\gamma \times [0,1]$. The resulting function is a thickening and is equivariant with respect to all shifts in $\R^{d-1}$.
\end{proof}

\begin{proof}[\textbf{Proof of Theorem~\ref{cont_gen}, part 1}]
First, let us consider the one-dimensional case, where we weaken the equivariance requirement to integer shifts only. The only place in the proof of Theorem~\ref{no} where the (full) shift equivariance was used was when we showed that for some small enough $\eps>0$, there exists an $r<\infty$ such that each of the events of the form $A_{i \eps}(f(X))$ can be $\eps/4$-approximated by an event $B_{i\eps}(X)$ which belongs to $\cF_{[i\eps-r,i\eps+r]}$. We were able to do that since $B_0(X)$ belonged to $\cF_{[-r,r]}$ and, using shift equivariance, we could choose $B_{i \eps}(X)$ to be a shift of $B_0(X)$.

To get the same using only equivariance w.r.t. integer shifts, we first choose $\eps=1/m$ for some large integer $m$ (this can always be done since all that we required of $\eps$ is to be small). Then for each $A_{i\eps}(f(X))$ for $0\le i<m$ we may find a $B_{i\eps}(X)$ which $\eps/4$-approximates it and belongs to $\cF_{[i\eps-r_i,i\eps+r_i]}$ for some $r_i<\infty$. Then we define $r:=\max\{r_i\}_{i=0}^{m-1}$ and for each $i\ge m$, we $\eps/4$-approximate $A_{i\eps}(f(X))$ by the shift of $B_{(i \mod m) \eps}(X)$ by the integer $(i-(i\mod m))\eps$. Thus, the equivariance w.r.t. integer shifts ensures that $B_{i \eps}(X)$ belongs to $\cF_{[i\eps-r,i\eps+r]}$ for all $i$. The rest of the proof follows as in the proof of Theorem~\ref{no}.

We turn now to the multi-dimensional setting. We first observe that the proof of Theorem~\ref{no} may be adapted in a straightforward manner to the multi-dimensional setting when we have full shift equivariance. To do so, one defines, for $\eps>0$, the events $A_{i_1 \eps,\ldots, i_d \eps}(U)$ (with $i_1,\ldots, i_d\in\Z$) to be ``there is a point of $U$ in $[i_1 \eps,(i_1+1)\eps]\times \cdots \times [i_d\eps,(i_d+1)\eps]$''. Then one needs to show that for some small enough $\eps>0$, there exists an $r<\infty$ such that each of the events $A_{i_1 \eps,\ldots, i_d \eps}(f(X))$ can be $\eps^d/4$-approximated by an event $B_{i_1 \eps,\ldots, i_d \eps}(X)$ which belongs to $\cF_{[i_1\eps-r,i_1\eps+r]\times\cdots\times[i_d\eps-r,i_d\eps+r]}$. This is where shift equivariance is used. In the rest of the proof one proceeds exactly as in the one-dimensional setting (and, in particular, shift equivariance is no longer used) where the events $C_{i\eps}(U)$ and $D_{i\eps}(U,V)$ are replaced by $C_{i_1 \eps,\ldots, i_d \eps}(U)$ and $D_{i_1 \eps,\ldots, i_d \eps}(U,V)$ with analogous definitions and where $b(U)$ is now defined as the number of $B_{i_1 \eps,\ldots, i_d \eps}(U)$ which occur for $0\le i_1,\ldots, i_d\le L-1$ for some large $L$, and $c(U)$ and $d(U,V)$ are defined likewise. $E(U,V)$, $\Omega_1$, $\Omega_2$ are defined analogously.

To adapt this proof to the case of equivariance with respect to $d$ linearly independent shifts of $\R^d$, one first notes that by applying a linear transformation, we may assume, WLOG, that these $d$ shifts are the standard basis for $\R^d$. Then, in order to obtain the events $B$ satisfying the property described above, we choose $\eps=\frac{1}{m}$ for a large enough integer $m$ and proceed analogously to what we described in the second paragraph of this proof.\qedhere
\end{proof}

\end{document}